\newtheorem{theorem}{Theorem}[section]
\newtheorem{lemma}[theorem]{Lemma}
\theoremstyle{definition}
\newtheorem{definition}[theorem]{Definition}
\newtheorem{proposition}{Proposition}
\theoremstyle{remark}
\numberwithin{equation}{section}
\begin{document}

\title{Mannheim curves with modified orthogonal frame in Euclidean 3-space}

\author{Mohamd Saleem Lone}
\address{International Centre for Theoretical Sciences, Tata Institute of Fundamental Research, 560089, Bengaluru, India}
\curraddr{International Centre for Theoretical Sciences, Tata Institute of Fundamental Research, 560089, Bengaluru, India}
\email{mohamdsaleem.lone@icts.res.in}

\author{Hasan Es}
\address{Gazi University, Gazi Educational Faculty, Department of
Mathematical Education, 06500 Teknikokullar / Ankara-Turkey}
\email{hasanes@gazi.edu.tr}

\author{Murat Kemal Karacan}
\address{Usak University, Faculty of Sciences and Arts, Department of
Mathematics,1 Eylul Campus, 64200,Usak-Turkey}
\email{murat.karacan@usak.edu.tr}

\author{Bahaddin Bukcu}
\address{Gazi Osman Pasa University, Faculty of Sciences and Arts,
Department of Mathematics,60250,Tokat-Turkey}
\email{bbukcu@yahoo.com}


\subjclass[2000]{53A04, 53A35}



\keywords{Mannheim curves, Mannheim partner curve, Modified orthogonal frame.}

\begin{abstract}
In this paper, we investigate Mannheim pairs, Frenet-Mannheim curves and Weakened
Mannheim curves with respect to the modified orthogonal frame in Euclidean 3-space$(E^3)$. We obtain some characterizations of these curves.
\end{abstract}

\maketitle

\section{\protect \bigskip Introduction}
In the study of the classical differential geometry of space curves, finding the corresponding relations between different space curves has been an important and interesting characterization problem of space curves. For example, if the normal vector of one space curve $\varphi$ is normal to another curve $\psi$, then $\psi$ is called as the Bertrand mate of $\varphi$. Liu [2] characterized similar type of curves - Weakened Bertrand curves and Frenet Bertrand curves under weakened conditions. There is another important class of space curves called Mannheim curves, where the normal vector of one curve is the binormal vector of some other curve, such a pair of curves is called as a Mannheim pair. Liu and Wang [3] derived the necessary and sufficient conditions for a curve to possess a Mannheim partner curve in Euclidean and Minkowski spaces. \"{O}ztekin and Erg\"{u}t [5] studied the null Mannheim curves in Minkowski space and derived some necessary and sufficient conditions. Recently, Tun\c{c}er et al.[8] obtained some characterization results of the non-null weakened Mannheim curves in Minkowski 3-space. Moreover, Karacan [1] characterized the Weakened Mannheim curves in Euclidean 3-space. As of now, authors usually have studied Mannheim curves with respect to the classical Frenet-Serret frame of a curve, where we are supposed to consider that the curvature $\kappa(s) \neq 0$.  In this paper, we shall drop the condition of $\kappa(s)\neq 0$ and consider a general set of curves with a discrete set of zeros of $\kappa(s)$ to characterize the Mannheim curves according to modified orthogonal frame in Euclidean 3-space. \bigskip \bigskip \ 

\section{Preliminaries}

\bigskip Let $\varphi(s)$ be a $C^{3}$ space curve in Euclidean 3-space $E^3$, parametrized by arc length $s$. We also assume that{ \it its
curvature $\kappa (s)\neq 0$ anywhere}. Then an
orthonormal frame $\left \{ t,n,b\right \} $ exists satisfying the
Frenet-Serret equations%
\begin{equation}
\left[ 
\begin{array}{c}
t^{\prime }(s) \\ 
n^{\prime }(s) \\ 
b^{\prime }(s)%
\end{array}%
\right] =\left[ 
\begin{array}{ccc}
0 & \kappa & 0 \\ 
-\kappa & 0 & \tau \\ 
0 & -\tau & 0%
\end{array}%
\right] \left[ 
\begin{array}{c}
t(s) \\ 
n(s) \\ 
b(s)%
\end{array}%
\right],  \tag{2.1}
\end{equation}%
where $t$ is the unit tangent, $n$ is the unit principal normal, $b$ is the unit binormal, and $\tau (s)$ is the torsion. For a given  $C^{1}$ function $\kappa (s)$  and a continuous function $\tau (s)$, there exists a $C^{3}$ curve $\varphi$ which has an orthonormal frame $\left \{t,n,b\right \} $ satisfying the Frenet-Serret frame (2.1). Moreover, any other curve $\tilde{\varphi}$ satisfying the same conditions, differs from $\varphi$ only by a rigid motion.

Now let $\varphi(t)$ be a general analytic curve which can be reparametrized by its arc length. Assuming that the curvature function has {\it discrete zero points} or $\kappa(s)$ {\it is not identically zero}, we have an orthogonal frame $\left \{ T,N,B\right \} $ defined as follows:%
\begin{equation*}
T=\frac{d\varphi }{ds},\quad  N=\frac{dT}{ds},\quad B=T\times N,
\end{equation*}%
where $T\times N$ is the vector product of $T$ and $N$. The relations
between $\left \{ T,N,B\right \} $ and previous Frenet frame vectors at
non-zero points of $\kappa $ are%
\begin{equation}
T=t,N=\kappa n,B=\kappa b.  \tag{2.2}
\end{equation}%
Thus, we see that $N(s_{0})=B(s_{0})=0$ when $\kappa (s_{0})=0$ and squares of the length
of $N$ and $B$ vary analytically in $s$. From Eq. (2.2), it is easy to calculate

\begin{equation}
\left[ 
\begin{array}{c}
T^{\prime }(s) \\ 
N^{\prime }(s) \\ 
B^{\prime }(s)%
\end{array}%
\right] =\left[ 
\begin{array}{ccc}
0 & 1 & 0 \\ 
-\kappa ^{2} & \frac{\kappa ^{\prime }}{\kappa } & \tau \\ 
0 & -\tau & \frac{\kappa ^{\prime }}{\kappa }%
\end{array}%
\right] \left[ 
\begin{array}{c}
T(s) \\ 
N(s) \\ 
B(s)%
\end{array}%
\right]  \tag{2.3}
\end{equation}%
and 
\begin{equation*}
\tau =\tau (s)=\frac{\det \left( \varphi ^{\prime },\varphi ^{\prime \prime
},\varphi ^{\prime \prime \prime }\right) }{\kappa ^{2}}
\end{equation*}%
is the torsion of $\varphi $. From Frenet-Serret equations, we know that any point, where $\kappa ^{2}=0$ is a removable singularity of $\tau $. Let $\left \langle ,\right \rangle $ be the standard inner product of $E^3$, then $\left \{ T,N,B\right \} $ satisfies:%
\begin{equation}
\left \langle T,T\right \rangle =1,\left \langle N,N\right \rangle =\left
\langle B,B\right \rangle =\kappa ^{2},\left \langle T,N\right \rangle
=\left \langle T,B\right \rangle =\left \langle N,B\right \rangle =0
\tag{2.4}.
\end{equation}%
The orthogonal frame defined in Eq. (2.3) satisfying Eq. (2.4) is called as modified orthogonal frame[7].
\begin{definition}
Let $\varphi$ and $\psi$ be two space curves in Euclidean 3-space, such that there exists a corresponding relation between $\varphi$ and $\psi$ in such a way that the principal normal vectors of $\varphi$ coincides with the binormal
vectors of $\psi$ at the corresponding points. Then $\psi$ is called as a Mannheim partner curve of Mannheim curve $\varphi$ and the pair $\left \{ \varphi, \psi\right \} $
is called as a Mannheim pair[3].
\end{definition}
\begin{definition}
A regular Mannheim curve $\psi(s^ \ast)$ with non-vanishing curvature for which there exists some other 
regular curve $\varphi(s)$, parametrized by arc length and has non-vanishing curvature, in
bijection with it in such a way that the binormal to $\varphi(s)$ and the principal normal to $\psi(s^\ast)$
at each corresponding pair of points coincide with the joining line of corresponding points. The curve $\varphi(s)$ is said to be a Mannheim conjugate of $\psi(s^\ast)$[1,6].
\end{definition}
\begin{definition}
A Frenet-Mannheim(FM) curve is a Frenet curve $\psi(s^\ast)$ for which we have some other Frenet curve $\varphi(s)$ and $\varphi^\prime(s)$ is non-vanishing, in bijection with it so that, by appropriate selection of the Frenet frame,
the binormal vector $B_{\varphi}(s)$ and the principal normal vector $N_{\psi}(s^\ast)$  both lie on the joining line of the corresponding points of $\varphi(s)$ and  $\psi(s^\ast)$. The curve $\varphi(s)$ is called FM conjugate of $\psi(s^\ast)$[1,6].  
\end{definition}
\begin{definition}\label{def 1}
A Weakened Mannheim(WM) curve $\psi(s^\ast)$ is a regular curve such that there exists some other regular curve $\varphi(s)$ and a homeomorphism $\rho:I\rightarrow I^\ast$ such that:
\begin{itemize}
\item[(i)] There exists two closed subsets $M$, $N$  of $I$ which are disjoint  with empty interiors such that $\rho \in C^\infty$ on $I\backslash N$, $\frac{ds^\ast}{ds}=0$ on $M$, $\rho^{-1} \in C^\infty$ on $\rho\left(I \backslash M\right)$ and $\frac{ds}{ds^\ast}=0$ on $\rho(N)$, 
\item[(ii)] The joining line of $s$ and $s^\ast$ of $\varphi(s)$ and $\psi(s^\ast)$, respectively is orthogonal to $\varphi(s)$ and $\psi(s^\ast)$ at the corresponding points $s,s^\ast$ respectively, and is along the principal normal to $\varphi(s)$ or $\psi(s^\ast)$, whenever it is well defined. The curve $\varphi(s)$  is said to be a WM conjugate of $\psi(s^\ast)$[1,6].
\end{itemize}
\end{definition}
From the classical differential geometry, we found that there is a rich literature available on the Bertrand pairs in comparison to the Mannheim curves. Thus, in this paper, we study the Mannheim curves according to the modified orthogonal frame in Euclidean 3-space and obtain several conditions for Mannheim partner, $FM$ and $WM$
curves.

\section{Mannheim partner curves according to modified orthogonal frame in $%
E^{3}$}

\begin{theorem}
Let $C:\varphi (s)$ be a Mannheim curve in $E^{3}$ parameterized by its
arc length $s$ and let $C^{\ast }:\psi (s^{\ast })$ be the Mannheim
partner curve of $C$ with an arc length parameter $s^{\ast }$. The distance
between corresponding points of the Mannheim partner curves in $E^{3}$ is $%
\left \vert c\right \vert \kappa _{\varphi }$, where $c$ is nonzero constant
and $\kappa _{\varphi }$ is the curvature of curve $\varphi $.
\end{theorem}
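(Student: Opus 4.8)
The plan is to realize the Mannheim partner $\psi$ as a displacement of $\varphi$ along their common normal line and then read off the distance directly from the modified frame relations (2.4). By the definition of a Mannheim pair, the principal normal line of $\varphi$ at $\varphi(s)$ passes through the corresponding point $\psi(s^{*})$, and this line is spanned by the modified normal $N$ of $\varphi$. I would therefore write
\begin{equation*}
\psi(s^{*}(s)) = \varphi(s) + \lambda(s)\, N(s)
\end{equation*}
for some scalar function $\lambda(s)$. The distance between corresponding points is then immediate: by (2.4) we have $\langle N,N\rangle = \kappa_{\varphi}^{2}$, so
\begin{equation*}
\lVert \psi - \varphi \rVert = \lvert \lambda(s)\rvert\,\lVert N(s)\rVert = \lvert \lambda(s)\rvert\,\kappa_{\varphi}(s).
\end{equation*}
Thus the entire theorem reduces to showing that $\lambda$ is a nonzero constant $c$.

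To pin down $\lambda$, I would differentiate the displacement relation with respect to $s$ and substitute the modified frame equations (2.3) for $N'$, obtaining an expression for $d\psi/ds$ in the basis $\{T,N,B\}$ of $\varphi$. The Mannheim condition asserts that $N$ (the principal normal of $\varphi$) is parallel to the binormal $B_{\psi}$ of $\psi$; in particular $N$ is orthogonal to the tangent $T_{\psi}$, and since $T_{\psi}$ is parallel to $d\psi/ds$ this gives $\langle N, d\psi/ds\rangle = 0$. Expanding this inner product using (2.4) produces a first-order linear ordinary differential equation for $\lambda$, which I would integrate to conclude that the relevant coefficient is constant; the nonvanishing of $c$ follows from the fact that $\varphi$ and $\psi$ are genuinely distinct curves, so the displacement cannot be identically zero.

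The main obstacle I anticipate is the careful bookkeeping forced by the modified frame being non-unit. Unlike the Frenet case, here $\lVert N\rVert = \lVert B\rVert = \kappa_{\varphi}$ and $\langle N, N'\rangle = \kappa_{\varphi}\kappa_{\varphi}'$, so every orthogonality computation carries extra $\kappa_{\varphi}$-factors, and one must be vigilant about which coefficient — the one multiplying $N$ itself, versus the one multiplying the unit normal $N/\kappa_{\varphi}$ — is being shown constant, since it is exactly this choice that distinguishes a constant distance from the stated value $\lvert c\rvert\kappa_{\varphi}$. A secondary technicality is the reparametrization $s\mapsto s^{*}$: the derivative $d\psi/ds$ differs from the unit tangent $T_{\psi}$ by the factor $ds^{*}/ds$, but because only the direction of $d\psi/ds$ enters the orthogonality condition this factor is harmless. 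Finally, since the modified frame was introduced precisely to tolerate the discrete zeros of $\kappa_{\varphi}$, I would remark that the identity $\lVert \psi - \varphi\rVert = \lvert c\rvert\kappa_{\varphi}$ extends by analyticity across those isolated points, where the distance simply degenerates to zero.
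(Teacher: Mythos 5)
Your setup is identical to the paper's: the ansatz $\psi(s^{\ast})=\varphi(s)+\lambda(s)N_{\varphi}(s)$, differentiation via (2.3), and the use of the Mannheim condition $N_{\varphi}\parallel B_{\psi}$ to kill the $N_{\varphi}$-component of $d\psi/ds$. (The paper pairs its equation (3.2) with $B_{\psi}$, you pair with $N_{\varphi}$; since these vectors are parallel the two computations are the same.) Either way the resulting ODE is $\lambda'+\lambda\kappa_{\varphi}'/\kappa_{\varphi}=0$, i.e. $(\lambda\kappa_{\varphi})'=0$.

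The gap is in your closing step. That ODE does \emph{not} make $\lambda$ constant; it makes the product $\lambda\kappa_{\varphi}$ constant, so $\lambda=c/\kappa_{\varphi}$ --- this is exactly the paper's equation (3.3), and the paper itself remarks that $\mu$ is not constant unless $c=0$. Consequently your reduction ``the entire theorem reduces to showing that $\lambda$ is a nonzero constant'' cannot be carried out, and the distance formula you correctly wrote down, $\lVert\psi-\varphi\rVert=\lvert\lambda\rvert\,\kappa_{\varphi}$, evaluates to the constant $\lvert c\rvert$ (the classical Liu--Wang statement that Mannheim partners lie at constant distance), not to $\lvert c\rvert\kappa_{\varphi}$. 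You clearly sensed that this is the dangerous point --- your remark about which coefficient (that of $N$ versus that of $N/\kappa_{\varphi}$) ends up constant --- but you left it unresolved, and resolving it honestly contradicts the formula you set out to prove. For what it is worth, the paper's own proof carries the same tension: it derives $\mu=c/\kappa_{\varphi}$ and then simply asserts $d=\lvert c\rvert\kappa_{\varphi}$, whereas $\lvert\mu\rvert\cdot\lVert N_{\varphi}\rVert=(\lvert c\rvert/\kappa_{\varphi})\cdot\kappa_{\varphi}=\lvert c\rvert$. So your plan reproduces the paper's computation faithfully up to (3.3); the final assertion is where both accounts need repair.
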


\begin{proof}
From the definition of Mannheim pair $\{C,C^{\ast }\}$, we can write $\overrightarrow{%
\varphi (s)\psi (s^{\ast })}=\mu (s)N_{\varphi }(s)$, or 
\begin{equation}
\psi (s^{\ast })=\varphi (s)+\mu (s)N_{\varphi }(s)  \tag{3.1}
\end{equation}%
for some function $\mu $ $\left( s\right)$. Taking derivative with
respect to $s$ and using Eqn. (2.3), we get

\begin{equation*}
\psi ^{^{\prime }}(s^{\ast })=T_{\varphi }+\mu ^{^{\prime }}N_{\varphi }+\mu
(-\kappa _{\varphi }^{2}T_{\varphi }+\frac{\kappa _{\varphi }^{^{\prime }}}{%
\kappa _{\varphi }}N_{\varphi }+\tau _{\varphi }B_{\varphi })
\end{equation*}%
or 
\begin{equation}
T_{\psi }\frac{ds^{\ast }}{ds}=\left( 1-\mu \kappa _{\varphi }^{2}\right)
T_{\varphi }+\left( \mu ^{^{\prime }}+\mu \frac{\kappa _{\varphi }^{^{\prime }}%
}{\kappa _{\varphi }}\right) N_{\varphi }+\mu \tau _{\varphi }B_{\varphi }. 
\tag{3.2}
\end{equation}%
Taking inner product of Eqn. (3.2) with $B_{\psi }$ and
consider $\frac{N_{\varphi }}{\kappa _{\varphi }}=\epsilon \frac{%
B_{\psi }}{\kappa _{\psi }}\left( \epsilon =\pm 1\right) ,$ we get 
\begin{equation}
\mu ^{^{\prime }}+\mu \frac{\kappa _{\varphi }^{\prime }}{\kappa _{\varphi }}=0%
\text{ or }\mu =\frac{c}{\kappa _{\varphi }}.  \tag{3.3}
\end{equation}%
This means that $\mu $ is not constant except $c=0$. On the other hand, from
the distance function between the points of $\varphi (s)$ and $\psi (s^{\ast })$,
we have%
\begin{equation*}
d\left( \varphi (s),\psi (s^{\ast })\right) =\left \vert c\right \vert
\kappa _{\varphi }(s)\text{.}
\end{equation*}
\end{proof}

\begin{theorem}
A space curve $\varphi (s)$ in $E^{3}$ with respect to modified orthogonal frame is a Mannheim curve if and only if its curvature $%
\kappa _{\varphi }$ and torsion $\tau _{\varphi }$ satisfies:
\begin{equation}
\kappa _{\varphi }=c\left( \kappa _{\varphi }^{2}+\tau _{\varphi }^{2}\right) ,
\tag{3.4}
\end{equation}%
where $c$ is non-zero constant.

\begin{proof}
Let $C:\varphi (s)$ be a Mannheim curve in $E^{3}$ with arc length
parameter $s$ and $C^{\ast}:\psi(s^{_{\ast}})$ the Mannheim partner
curve of $C$ with arc length parameter $s^{\ast }$. Inserting Eqn. (3.3) in
Eqn. (3.1), we get 
\begin{equation}
\psi (s^{\ast })=\varphi (s)+\frac{c}{\kappa _{\varphi }(s)}N_{\varphi }(s) 
\tag{3.5}
\end{equation}%
for some non-zero constant $c$. Differentiating Eqn. (3.5) with respect to $s$ and applying the
modified orthogonal frame formulas, we obtain 
\begin{equation}
T_{\psi }\frac{ds^{\ast }}{ds}=\left( 1-c\kappa _{\varphi }\right) T_{\varphi
}+\frac{c\tau _{\varphi }}{\kappa _{\varphi }}B_{\varphi }.  \tag{3.6}
\end{equation}%
Again differentiating Eqn. (3.6) with respect to $s$ and
applying the modified orthogonal frame formulas, we get$\allowbreak $ 
\begin{eqnarray*}
N_{\psi }\left(\frac{ds^{\ast }}{ds}\right)^2+T_{\psi}\frac{d^2 s^{\ast}}{ds^2} &=&-c\kappa _{\varphi }^{\prime
}T_{\varphi }+\left( 1-c\kappa _{\varphi }\right) N_{\varphi }+\frac{c\tau
_{\varphi }^{\prime }\kappa _{\varphi }-c\kappa _{\varphi }^{\prime }\tau
_{\varphi }}{\kappa _{\varphi }^{2}}B_{\varphi } \\
&&+\frac{c\tau _{\varphi }}{\kappa _{\varphi }}\left( -\tau _{\varphi
}N_{\varphi }+\frac{\kappa _{\varphi }^{\prime }}{\kappa _{\varphi }}B_{\varphi
}\right)
\end{eqnarray*}%
or 
\begin{equation}
N_{\psi }\left(\frac{ds^{\ast }}{ds}\right)^2+T_{\psi}\frac{d^2 s^{\ast}}{ds^2}=-c\kappa _{\varphi }^{\prime }T+%
\frac{1}{\kappa _{\varphi }}\left( \kappa _{\varphi }-c\kappa _{\varphi
}^{2}-c\tau _{\varphi }^{2}\right) N_{\varphi }+c\tau _{\varphi }^{\prime
}\kappa _{\varphi }B_{\varphi }.  \tag{3.7}
\end{equation}%
Taking the inner product of the Eqn. (3.7) with $B_{\psi }$, we get%
\begin{equation}
\kappa _{\varphi }-c\kappa _{\varphi }^{2}-c\tau _{\varphi }^{2}=0\text{ or }%
\kappa _{\varphi }=c\left( \kappa _{\varphi }^{2}+\tau _{\varphi }^{2}\right) .%
\text{ }  \tag{3.8}
\end{equation}%
This completes the proof.

Conversely, if the curvature $\kappa _{\varphi }$ and the torsion $\tau _{\varphi }$
of the curve $C$ satisfy Eqn. (3.4) for some nonzero constant $c,$ then
define a curve $C$ by Eqn. (3.5) and we will prove that $C$ is Mannheim and 
$C^{\ast }$ is the partner curve of $C.$ We already have found  the
equality below 
\begin{equation*}
T_{\psi }\frac{ds^{\ast }}{ds}=\left( 1-c\kappa _{\varphi }\right) T_{\varphi
}+\frac{c\tau _{\varphi }}{\kappa _{\varphi }}B_{\varphi }.
\end{equation*}%
Differentiating last equality with respect to $s$ and
with the help of Eqn. (3.8), we get 
\begin{equation}
N_{\psi }\left( \frac{ds^{\ast }}{ds}\right) ^{2}+T_{\psi }\frac{d^{2}s^{\ast}}{%
ds^{2}}=-c\kappa _{\varphi }^{\prime }\allowbreak T_{\varphi }+\frac{%
c\allowbreak \tau _{\varphi }^{\prime }}{\kappa _{\varphi }}B_{\varphi }. 
\tag{3.9}
\end{equation}%
Taking the cross product of Eqn. (3.6) with Eqn. (3.9), we obtain%
\begin{equation*}
\frac{ds}{ds^{\ast }}T_{\psi }\times \left[ N_{\psi }\left( \frac{ds^{\ast
}}{ds}\right) ^{2}+T_{\psi }\frac{d^{2}s^{\ast 2}}{ds}\right] =\left[
\left( 1-c\kappa _{\varphi }\right) T_{\varphi }+\frac{c\tau _{\varphi }}{%
\kappa _{\varphi }}B_{\varphi }\right] \times \left( -c\kappa _{\varphi
}^{\prime }\allowbreak T_{\varphi }+\frac{c\allowbreak \tau _{\varphi
}^{\prime }}{\kappa _{\varphi }}B_{\varphi }\right)
\end{equation*}%
or%
\begin{equation}
\left( \frac{ds^{\ast }}{ds}\right) ^{3}B_{\psi }=c\left( \allowbreak -\tau
_{\varphi }^{\prime }+c\tau _{\varphi }^{\prime }\kappa _{\varphi }-c\kappa
_{\varphi }^{\prime }\tau _{\varphi }\right) \frac{N_{\varphi }}{\kappa
_{\varphi }}.  \tag{3.10}
\end{equation}%
Since both $\frac{N_{\varphi }}{\kappa _{\varphi }}$ and $\frac{B_{\psi
}}{\kappa _{\psi }}$ have unit lenght, we get%
\begin{equation}
\left( \frac{ds^{\ast }}{ds}\right) ^{3}=\frac{c\left( \allowbreak -\tau
_{\varphi }^{\prime }+c\tau _{\varphi }^{\prime }\kappa _{\varphi }-c\kappa
_{\varphi }^{\prime }\tau _{\varphi }\right) }{\kappa _{\psi }}.  \tag{3.11}
\end{equation}%
Thus, we have 
\begin{equation*}
\frac{B_{\psi }}{\kappa _{\psi }}=\epsilon \frac{N_{\varphi
}}{\kappa _{\varphi }},\epsilon =\pm 1
\end{equation*}%
or $N_{\varphi }$ and $B_{\psi }$ are linearly dependent. This completes the
proof.
\end{proof}
\end{theorem}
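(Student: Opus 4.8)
The plan is to prove both implications by differentiating the Mannheim relation twice and extracting a single scalar identity through a well-chosen projection onto the partner frame. Throughout I will exploit the defining feature of a Mannheim pair in the modified frame: the principal normal $N_\varphi$ of $\varphi$ and the binormal $B_\psi$ of $\psi$ are parallel. Because the modified frame is not orthonormal --- indeed $\langle N_\varphi, N_\varphi\rangle = \langle B_\psi, B_\psi\rangle = \kappa^2$ by (2.4) --- I will phrase this as the coincidence up to sign of the unit vectors, $N_\varphi/\kappa_\varphi = \epsilon\, B_\psi/\kappa_\psi$ with $\epsilon = \pm 1$, exactly as in the proof of Theorem 3.1.

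For the forward implication I start from $\psi(s^*) = \varphi(s) + \mu(s) N_\varphi(s)$ with $\mu = c/\kappa_\varphi$, the value forced by (3.3). Differentiating once with respect to $s$ and applying the structure equations (2.3), the $N_\varphi$-component cancels precisely because $\mu$ satisfies $\mu' + \mu\,\kappa_\varphi'/\kappa_\varphi = 0$, leaving $T_\psi\, ds^*/ds$ as a combination of $T_\varphi$ and $B_\varphi$ only. Differentiating a second time and regrouping with (2.3) yields an expression whose $N_\varphi$-coefficient is $(\kappa_\varphi - c\kappa_\varphi^2 - c\tau_\varphi^2)/\kappa_\varphi$. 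The decisive step is to take the inner product with $B_\psi$: on the left, $\langle T_\psi, B_\psi\rangle = \langle N_\psi, B_\psi\rangle = 0$; on the right, the $T_\varphi$ and $B_\varphi$ terms vanish since both are orthogonal to $N_\varphi$ and hence to $B_\psi$, whereas $\langle N_\varphi, B_\psi\rangle \neq 0$. This forces the $N_\varphi$-coefficient to vanish, which is exactly $\kappa_\varphi = c(\kappa_\varphi^2 + \tau_\varphi^2)$, i.e. (3.4).

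For the converse, assuming (3.4), I define $\psi$ by the same relation and repeat the two differentiations. The first derivative is unchanged; in the second derivative the substitution $\kappa_\varphi = c(\kappa_\varphi^2 + \tau_\varphi^2)$ annihilates the $N_\varphi$-term, so both the first and second derivatives of $T_\psi\, ds^*/ds$ lie in the plane spanned by $T_\varphi$ and $B_\varphi$. To verify that $\psi$ is genuinely a Mannheim partner I must show $N_\varphi \parallel B_\psi$, and the device for this is the cross product of these two derived vector equations. Since both right-hand sides are combinations of $T_\varphi$ and $B_\varphi$, their cross product is a scalar multiple of $T_\varphi \times B_\varphi = -N_\varphi$ (using $B_\varphi = T_\varphi \times N_\varphi$ and (2.4)), while the left side collapses --- the $T_\psi \times T_\psi$ term dies --- to $(ds^*/ds)^3\, T_\psi \times N_\psi = (ds^*/ds)^3 B_\psi$. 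Equating the two and comparing lengths of the unit vectors $N_\varphi/\kappa_\varphi$ and $B_\psi/\kappa_\psi$ yields $B_\psi/\kappa_\psi = \epsilon\, N_\varphi/\kappa_\varphi$, i.e. linear dependence of $N_\varphi$ and $B_\psi$, establishing the Mannheim condition.

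I expect the principal difficulty to be bookkeeping rather than conceptual. One must track the non-unit normalization of the modified frame carefully so that both the projection argument (forward) and the length comparison (converse) are legitimate, and one must check that differentiating $(c\tau_\varphi/\kappa_\varphi)B_\varphi$ produces the cross term $-(c\tau_\varphi^2/\kappa_\varphi)N_\varphi$ that combines with $(1-c\kappa_\varphi)N_\varphi$ to form exactly the factor $\kappa_\varphi - c\kappa_\varphi^2 - c\tau_\varphi^2$. The genuinely clever points are the two elimination devices: projecting onto $B_\psi$ in the forward direction and cross-multiplying in the converse, both designed to wipe out the $T_\varphi$ and $B_\varphi$ contributions and expose the single governing scalar relation.
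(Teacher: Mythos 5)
Your proposal is correct and follows essentially the same route as the paper: the forward direction differentiates $\psi=\varphi+(c/\kappa_\varphi)N_\varphi$ twice and projects onto $B_\psi$ to isolate the $N_\varphi$-coefficient $\kappa_\varphi-c\kappa_\varphi^2-c\tau_\varphi^2$, and the converse kills that term via (3.4) and cross-multiplies the two derivative equations so that the left side collapses to $(ds^\ast/ds)^3 B_\psi$ and the right to a multiple of $T_\varphi\times B_\varphi=-N_\varphi$, with the final length comparison of $N_\varphi/\kappa_\varphi$ and $B_\psi/\kappa_\psi$ handled exactly as in the paper. No gaps.
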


\begin{theorem}
A pair of curves $(C,C^{\ast })$ is a Mannheim pair iff \ the curvature $\kappa _{\psi }$
and the torsion $\tau _{\psi }$ of curve $C^{\ast }$ satisfy: 
\begin{equation}
\tau _{\psi }^{\prime }=\frac{d\tau _{\psi }}{ds^{\ast }}=\frac{\kappa
_{\psi }}{a}\left( 1+a^{2}\tau _{\psi }^{2}\right), \tag{3.12}
\end{equation}%
where $a$ is a non-zero constant.
\end{theorem}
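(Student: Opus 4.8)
The plan is to reverse the roles of the two curves and run the same kind of computation used in Theorems 3.1 and 3.2, but now based at the partner curve $C^{\ast}:\psi(s^{\ast})$. Since $\{C,C^{\ast}\}$ is a Mannheim pair, the principal normal of $\varphi$ agrees in direction with the binormal of $\psi$, so the segment joining corresponding points lies along $B_{\psi}$. Accordingly I would first write $\varphi(s)=\psi(s^{\ast})+f(s^{\ast})B_{\psi}(s^{\ast})$ for a scalar function $f$, and differentiate with respect to $s^{\ast}$ using the modified frame equations (2.3) for $\psi$. This gives $T_{\varphi}\frac{ds}{ds^{\ast}}=T_{\psi}-f\tau_{\psi}N_{\psi}+\left(f'+f\frac{\kappa_{\psi}'}{\kappa_{\psi}}\right)B_{\psi}$.

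Because $N_{\varphi}$ is parallel to $B_{\psi}$ and $T_{\varphi}\perp N_{\varphi}$, we have $\langle T_{\varphi},B_{\psi}\rangle=0$. Taking the inner product of the previous relation with $B_{\psi}$ and using (2.4) forces $f'+f\frac{\kappa_{\psi}'}{\kappa_{\psi}}=0$, hence $f=a/\kappa_{\psi}$ for a nonzero constant $a$ (exactly as $\mu=c/\kappa_{\varphi}$ arose before). Substituting back, $T_{\varphi}\frac{ds}{ds^{\ast}}=T_{\psi}-\frac{a\tau_{\psi}}{\kappa_{\psi}}N_{\psi}$, so $T_{\varphi}$ lies in the osculating plane of $\psi$, as it must.

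The decisive step is to differentiate this relation once more and impose the remaining half of the Mannheim condition, namely that $N_{\varphi}=\frac{dT_{\varphi}}{ds}$ is itself parallel to $B_{\psi}$. Differentiating $T_{\varphi}\frac{ds}{ds^{\ast}}=T_{\psi}-\frac{a\tau_{\psi}}{\kappa_{\psi}}N_{\psi}$ and applying (2.3) produces a vector whose $B_{\psi}$-component is harmless but whose $T_{\psi}$- and $N_{\psi}$-components must vanish; these two scalar equations also carry an unknown reparametrization factor coming from $\frac{d}{ds}\left(\frac{ds^{\ast}}{ds}\right)$. I would eliminate that unknown between the two equations, which collapses them to the single relation $1+a^{2}\tau_{\psi}^{2}-\frac{a\tau_{\psi}'}{\kappa_{\psi}}=0$ after inserting $f=a/\kappa_{\psi}$; rearranging yields exactly (3.12). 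The converse runs the same calculation backwards: assuming (3.12), I would define $\varphi(s)=\psi(s^{\ast})+\frac{a}{\kappa_{\psi}}B_{\psi}$, and the identity (3.12) is precisely what makes the $T_{\psi}$- and $N_{\psi}$-components of $N_{\varphi}$ cancel, so that $N_{\varphi}\parallel B_{\psi}$ and $\{C,C^{\ast}\}$ is a Mannheim pair.

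I expect the main obstacle to be the bookkeeping in the second differentiation: one is differentiating a non-unit modified frame, the reparametrization $s\leftrightarrow s^{\ast}$ introduces the extra scalar $\frac{d}{ds}\left(\frac{ds^{\ast}}{ds}\right)$, and the cleanest route is to organize the computation so that this scalar is eliminated between the $T_{\psi}$- and $N_{\psi}$-components rather than computed explicitly. A convenient cross-check, and an alternative derivation, is to pass momentarily to the unit Frenet frame of $\psi$, write $T_{\varphi}=\cos\theta\,t_{\psi}+\sin\theta\,n_{\psi}$ with $\tan\theta=-a\tau_{\psi}$, and use the classical Mannheim angle relation $\frac{d\theta}{ds^{\ast}}=-\kappa_{\psi}$, which reproduces (3.12) directly after differentiating $\tan\theta=-a\tau_{\psi}$.
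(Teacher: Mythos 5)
Your argument is correct, and your main line of attack differs from the paper's in one substantive way. The paper, after reaching $T_{\varphi}\frac{ds}{ds^{\ast}}=T_{\psi}-\frac{a\tau_{\psi}}{\kappa_{\psi}}N_{\psi}$ exactly as you do, introduces the angle $\theta$ between $T_{\varphi}$ and $T_{\psi}$, derives $\cos\theta=\frac{ds^{\ast}}{ds}$, $\tan\theta=-a\tau_{\psi}$ and (by differentiating $T_{\varphi}=\cos\theta\,T_{\psi}+\frac{\sin\theta}{\kappa_{\psi}}N_{\psi}$ and imposing $N_{\varphi}\parallel B_{\psi}$) the relation $\theta'=-\kappa_{\psi}$, and then obtains (3.12) by differentiating $\tan\theta=-a\tau_{\psi}$ --- this is precisely the route you relegate to a ``cross-check'' in your last paragraph. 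Your primary route instead differentiates (3.16) once more and eliminates the scalar $\frac{d^{2}s}{ds^{\ast 2}}$ between the $T_{\psi}$- and $N_{\psi}$-components; I checked the algebra: the second derivative is $a\tau_{\psi}\kappa_{\psi}T_{\psi}+\left(1-\frac{a\tau_{\psi}'}{\kappa_{\psi}}\right)N_{\psi}-\frac{a\tau_{\psi}^{2}}{\kappa_{\psi}}B_{\psi}$ modulo the $T_{\varphi}\frac{d^{2}s}{ds^{\ast 2}}$ term, and the elimination gives $-a^{2}\tau_{\psi}^{2}=1-\frac{a\tau_{\psi}'}{\kappa_{\psi}}$, which is exactly (3.12). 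What the elimination buys is economy: no angle is needed. What the paper's detour buys is the pair of identities $\tan\theta=-a\tau_{\psi}$ and $\theta'=-\kappa_{\psi}$, which are reused immediately in the following Proposition to integrate (3.12) explicitly as $\tau_{\psi}=\frac{1}{a}\tan\left(\int\kappa_{\psi}\,ds+c_{0}\right)$ and conclude uniqueness of the partner. For the converse both you and the paper define $\varphi=\psi+\frac{a}{\kappa_{\psi}}B_{\psi}$ and verify $N_{\varphi}\parallel B_{\psi}$; the paper isolates $N_{\varphi}$ by taking two successive cross products with $T_{\varphi}$, while you observe that under (3.12) the in-osculating-plane part of the second derivative, $a\tau_{\psi}\kappa_{\psi}T_{\psi}-a^{2}\tau_{\psi}^{2}N_{\psi}$, is proportional to $T_{\varphi}$ and hence absorbed by the $T_{\varphi}\frac{d^{2}s}{ds^{\ast 2}}$ term --- the same content, stated more directly. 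One small caveat applies to both proofs equally: the divisions by $\kappa_{\psi}$ and by $\frac{ds}{ds^{\ast}}$ need $\kappa_{\psi}\neq 0$ and a genuine reparametrization, which the modified-frame setting only guarantees off a discrete set.
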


\begin{proof}
Suppose that $C:\varphi (s)$ is a Mannheim curve. By the definition of $\varphi{(s)}$,
we may write 
\begin{equation}
\varphi (s)=\psi (s^{\ast })+\delta (s^{\ast })B_{\psi }(s^{\ast }) 
\tag{3.13}
\end{equation}%
for some function $\delta (s^{\ast })$. Differentiating
Eqn. (3.13) with respect to $s^{\ast }$, we get

\begin{equation}
T_{\varphi }\frac{ds}{ds^{\ast }}=T_{\psi }-\delta \tau _{\psi }N_{\psi
}+\left( \delta ^{\prime }+\delta \frac{\kappa _{\psi }^{\prime }}{\kappa
_{\psi }}\right) B_{\psi }.  \tag{3.14}
\end{equation}%
Since $N_{\varphi }$ and $B_{\psi }$ are linearly dependent, we get $\ $%
\begin{equation}
\text{ }\delta ^{\prime }+\delta \frac{\kappa _{\psi }^{\prime }}{\kappa
_{\psi }}=0\  \text{or }\delta (s^{\ast })=\frac{a}{\kappa _{_{\psi }}}. 
\tag{3.15}
\end{equation}%
This means that $\delta (s^{\ast })$ is not a constant for each $s^{\ast }$
except $a=0$. Thus, with the help of Eqn. (3.15), we can rewrite  Eqn. (3.14) as
follows 
\begin{equation}
T_{\varphi }\frac{ds}{ds^{\ast }}=T_{\psi }-\frac{a\tau _{_{\psi }}}{\kappa
_{_{\psi }}}N_{\psi }.  \tag{3.16}
\end{equation}

Let $\theta $ be the angle between $T_{\varphi }$ and $T_{\psi }$ at the corresponding points of $C$ and $C^{\ast }$ in Eqn. (3.13). Then taking the inner product of Eqn. (3.16) with $T_{\psi }$ and considering the
equality $\cos ^{2}\theta +\sin ^{2}\theta =1,$ we get%
\begin{equation}
\cos \theta =\frac{ds^{\ast }}{ds}  \tag{3.17}
\end{equation}%
and%
\begin{equation}
\frac{ds^{\ast }}{ds}=-a\tau _{\psi }\sin \theta .  \tag{3.18}
\end{equation}%
From Eqn. (3.17) and Eqn. (3.18), we find 
\begin{equation}
\frac{ds}{ds^{\ast }}=\frac{1}{\cos \theta }=-\frac{a\tau _{\psi }}{\sin
\theta }  \tag{3.19}
\end{equation}%
and 
\begin{equation}
\tan \theta =-a\tau _{_{\psi }}.  \tag{3.20}
\end{equation}%
Thus, we can write Eqn. (3.16) as follows 
\begin{equation}
T_{\varphi }=\left( \cos \theta \right) T_{\psi }+\frac{\sin \theta }{\kappa
_{\psi }}N_{\psi }.  \tag{3.21}
\end{equation}%
Differentiating Eqn. (3.21) with respect to $s^{\ast }$, we
get
\begin{equation}
N_{\varphi }\frac{ds}{ds^{\ast }}=-\sin \theta \left( \kappa _{\psi }+\theta
^{\prime }\right) T_{\psi }+\frac{\cos \theta }{\kappa _{\psi }}\left(
\kappa _{\psi }+\theta ^{\prime }\allowbreak \right) N_{\psi }+\left( 
\frac{\tau _{\psi }}{\kappa _{\psi }}\sin \theta \right) B_{\psi }. 
\tag{3.22}
\end{equation}%
From this equation and the fact that the direction of $\frac{N_{\varphi }}{%
\kappa _{\varphi }}$ is coincident with $\frac{B_{\psi }}{\kappa _{\psi }}$%
, we get%
\begin{equation}
\left \{ 
\begin{array}{c}
-\sin \theta \left( \kappa _{\psi }+\theta ^{\prime }\right) =0 \\ 
\frac{\cos \theta }{\kappa _{\psi }}\left( \kappa _{\psi }+\theta ^{\prime
}\allowbreak \right) =0%
\end{array}%
\right.  \tag{3.23}
\end{equation}%
or%
\begin{equation}
\theta ^{\prime }=-\kappa _{_{\psi }}.  \tag{3.24}
\end{equation}

Differentiating Eqn. (3.20) with respect to $s^{\ast }$ and
applying Eqn. (3.24), we get%
\begin{equation*}
\kappa _{\psi }+a^{2}\kappa _{\psi }\tau _{\psi }^{2}-a\tau _{\psi
}^{\prime }=0
\end{equation*}%
or%
\begin{equation*}
\tau _{\psi }^{\prime }=\frac{\kappa _{\psi }}{a}\left( 1+a^{2}\tau
_{\psi }^{2}\right) .
\end{equation*}%
Conversely, if the curvature $\kappa _{\psi }$ and the torsion $\tau _{\psi }$
of $C^{\ast }$ satisfy Eqn. (3.12) for some nonzero constant $a,$
then define a curve $C$ by Eqn. (3.13) and we will prove that $C$ is a
Mannheim and $C^{\ast }$ is the partner curve of $C.$ We can easily reduce
Eqn. (3.13) in the following expression 
\begin{equation*}
T_{\varphi }\frac{ds}{ds^{\ast }}=T_{\psi }-\frac{a\tau _{\psi }}{\kappa
_{\psi }}N_{\psi }.
\end{equation*}%
Differentiating above equality with respect to $s^{\ast }$ and with the
help of Eqn. (2.3), we get%
\begin{equation*}
N_{\varphi }\left( \frac{ds}{ds^{\ast }}\right) ^{2}+T_{\varphi }\frac{d^{2}s}{%
ds^{\ast 2}}=N_{\psi }+\frac{a\kappa _{\psi }^{\prime }\tau _{\psi
}-a\kappa _{\psi }\tau _{\psi }^{^{\prime }}}{\kappa _{\psi }^{2}}%
N_{\psi }-\frac{a\tau _{\psi }}{\kappa _{\psi }}\left( -\kappa _{\psi
}^{2}T_{\psi }+\frac{\kappa _{\psi }^{\prime }}{\kappa _{\psi }}N_{\psi
}+\tau _{\psi }B_{\psi }\right)
\end{equation*}%
or noticing Eqn. (3.12), we get 
\begin{equation}
N_{\varphi }\left( \frac{ds}{ds^{\ast }}\right) ^{2}+T_{\varphi }\frac{d^{2}s}{%
ds^{\ast 2}}=a\tau _{\psi }\kappa _{\psi }T_{\psi }-a^{2}\tau _{\psi
}^{2}N_{\psi }+\frac{c\tau _{\psi }^{2}}{\kappa _{\psi }}B_{\psi }. 
\tag{3.25}
\end{equation}%
Taking the cross product of Eqn. (3.16) with Eqn. (3.25), we have%
\begin{equation*}
\frac{ds}{ds^{\ast }}T_{\varphi }\times \left[ \left( \frac{ds}{ds^{\ast }}%
\right) ^{2}N_{\varphi }+T_{\varphi }\frac{d^{2}s}{ds^{\ast 2}}\right] =\left(
T_{\psi }-\frac{a\tau _{\psi }}{\kappa _{\psi }}N_{\psi }\right) \times
\left( a\tau _{\psi }\kappa _{\psi }T_{\psi }-a^{2}\tau _{\psi
}^{2}N_{\psi }+\frac{a\tau _{\psi }^{2}}{\kappa _{\psi }}B_{\psi }\right)
\end{equation*}%
or%
\begin{equation}
\left( \frac{ds}{ds^{\ast }}\right) ^{3}B_{\varphi }=-\frac{a\tau _{\psi
}^{2}}{\kappa _{\psi }}\left( \frac{a\tau _{\psi }}{\kappa _{\psi }}%
T_{\psi }+N_{\psi }\right) .  \tag{3.26}
\end{equation}%
Again taking the cross product of Eqn. (3.16) with Eqn. (3.26), we 
obtain 
\begin{equation*}
\left( \frac{ds}{ds^{\ast }}\right) ^{4}N_{\varphi }=\frac{a\tau _{\psi }^{2}%
}{\kappa _{\psi }}\left( \kappa _{\psi }^{2}+a^{2}\tau _{\psi
}^{2}\right) B_{\psi }
\end{equation*}%
or%
\begin{equation*}
N_{\varphi }=\frac{\kappa _{\varphi }}{\kappa _{\psi }}B_{\psi }.
\end{equation*}%
This means that the principal normal direction $\frac{N_{\varphi }}{\kappa
_{\varphi }}$ of $C:\varphi(s)$ coincides with the binormal direction 
$\frac{B_{\psi}}{\kappa_{\psi }}$of $C^{\ast}:\psi(s^{\ast }).$
Hence $C :\varphi (s)$ is a Mannheim curve and $C^{\ast } :\psi
(s^{\ast })$ is its Mannheim partner curve. Therefore, for each Mannheim
curve, there is a unique Mannheim partner curve.
\end{proof}

\begin{proposition}
A simple parametric transformation reduces the condition
\begin{equation*}
\tau _{\psi }^{\prime }=\frac{\kappa _{\psi }}{a}\left( 1+a^{2}\tau
_{\psi }^{2}\right) 
\end{equation*}%
to
\begin{equation*}
\tau _{\psi } = \frac{1}{a} \tan \left( \int \kappa_{\psi }ds+c_{0}\right) .
\end{equation*}%
Thus the existence of Mannheim partner curve to a Mannheim curve is unique.
\end{proposition}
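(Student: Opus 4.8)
The plan is to treat the relation (3.12) as a first-order ordinary differential equation for $\tau_\psi$ as a function of the arc length $s^\ast$, with $\kappa_\psi$ regarded as a prescribed coefficient, and to solve it explicitly by separation of variables after a rescaling. The ``simple parametric transformation'' alluded to in the statement is exactly the change of unknown that absorbs the constant $a$ into the nonlinear term and renders the equation separable.

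First I would introduce the new dependent variable $u = a\tau_\psi$, so that $u' = a\tau_\psi'$ (all primes denoting $d/ds^\ast$, as in the proof of Theorem 3.3). Substituting into (3.12) and clearing the factor $1/a$ converts it into the separable equation $u' = \kappa_\psi\left(1+u^2\right)$. Since $1+u^2$ never vanishes, I would separate variables as $\frac{du}{1+u^2} = \kappa_\psi\,ds^\ast$ and integrate both sides: the left-hand side yields $\arctan u$, and the right-hand side yields $\int \kappa_\psi\,ds^\ast + c_0$ for an integration constant $c_0$. Solving for $u$ and then undoing the substitution via $\tau_\psi = u/a$ gives
\[
\tau_\psi = \frac{1}{a}\tan\left(\int \kappa_\psi\,ds^\ast + c_0\right),
\]
which is the asserted closed form (the statement writes the integration variable loosely as $ds$; it should be read as $ds^\ast$).

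For the uniqueness conclusion, I would invoke the standard existence-and-uniqueness theorem for ordinary differential equations. The right-hand side of $u' = \kappa_\psi\left(1+u^2\right)$ is smooth, and in particular locally Lipschitz in $u$, on any interval where $\kappa_\psi$ is continuous; hence by the Picard--Lindel\"of theorem the initial value $u(s_0^\ast) = a\tau_\psi(s_0^\ast)$, equivalently the choice of the constant $c_0$, determines the solution completely. Combining this with Theorem 3.3, which asserts that a curve $C^\ast$ is a Mannheim partner of $C$ precisely when its curvature and torsion obey (3.12), and with the reconstruction of $C$ from $C^\ast$ furnished in Theorem 3.2, we conclude that the Mannheim partner curve attached to a given Mannheim curve is unique.

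I expect the only delicate point to be bookkeeping rather than anything substantive: one must restrict attention to a subinterval on which the argument $\int \kappa_\psi\,ds^\ast + c_0$ stays away from the poles of $\tan$ (equivalently, where the cosine of that argument is nonzero), and one must keep the integration variable $s^\ast$ distinct from $s$. Beyond these caveats, the derivation is a routine separation of variables, so the substance of the proposition lies in the uniqueness statement, which follows from the elementary observation that (3.12) is a well-posed first-order initial value problem.
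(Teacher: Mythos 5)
The paper states this proposition with no proof at all, so there is nothing to diverge from: your substitution $u=a\tau_{\psi}$, which turns (3.12) into the separable equation $u'=\kappa_{\psi}(1+u^{2})$ and integrates to $\arctan u=\int\kappa_{\psi}\,ds^{\ast}+c_{0}$, is plainly the ``simple parametric transformation'' the authors had in mind, and your computation (including the correction of the integration variable to $s^{\ast}$) is right. The only soft spot is the uniqueness sentence: Picard--Lindel\"of gives uniqueness only once an initial value (equivalently $c_{0}$, equivalently the angle $\theta$ at one point) is fixed, so strictly one obtains a one-parameter family of admissible torsion functions, and identifying the partner curve uniquely still leans on the reconstruction in Theorem 3.3 --- but the paper itself asserts uniqueness with even less justification, so your account is if anything more complete than the source.
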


\begin{proposition}
Let $\{\varphi (s),\psi(s^\ast)\}$ be a Mannheim pair, where both $\varphi$ and $\psi$ are parametrized by arc length $s$ and $s^\ast$, respectively. If $\varphi (s)$ is
a generalized helix according to modified frame in $E^{3}$, then
$\psi (s^{\ast })$ is a straight line.

\begin{proof}
Let $T_{\varphi },$ $ N_{\varphi }$, $B_{\varphi }$ be the tangent, the principal
normal and the binormal vectors of $\varphi (s)$, respectively.
From the definition of the Mannheim curve and properties of generalized helices, we have \
\begin{equation*}
\left \langle N_{\varphi },u\right \rangle =\left \langle B_{\psi
},u\right \rangle =0,
\end{equation*}%
\ where $u$ is some constant vector. Differentiating the last equality,
we derive 
\begin{equation*}
\left \langle -\tau _{\psi }N_{\psi }+\frac{\kappa ^{\prime }}{\kappa }%
B_{\psi },u\right \rangle =-\tau _{\psi }\left \langle N_{\psi
},u\right \rangle +\frac{\kappa ^{\prime }}{\kappa }\left \langle B_{\psi
},u\right \rangle =-\tau _{\psi }\left \langle N_{\psi },u\right \rangle =0.
\end{equation*}%
Since $\left \langle N_{\psi },u\right \rangle \neq 0,$ we get 
\begin{equation*}
\tau _{\psi }=0.
\end{equation*}%
Thus using the last equality in Eqn. (3.12), we easily obtain%
\begin{equation*}
\kappa _{\psi }=0.
\end{equation*}
\end{proof}
\end{proposition}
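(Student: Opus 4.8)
The plan is to combine the defining axis of a generalized helix with the Mannheim relation $\frac{N_{\varphi}}{\kappa_{\varphi}}=\epsilon\frac{B_{\psi}}{\kappa_{\psi}}$ and the partner-curve characterization (3.12).

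First I would unpack the helix hypothesis in the language of the modified frame. Saying that $\varphi$ is a generalized helix means there is a fixed vector $u$ with $\langle T_{\varphi},u\rangle$ constant. Differentiating in $s$ and using $T_{\varphi}^{\prime}=N_{\varphi}$ from (2.3) gives $\langle N_{\varphi},u\rangle=0$. Since $\{\varphi,\psi\}$ is a Mannheim pair, $N_{\varphi}$ and $B_{\psi}$ are linearly dependent, so the same equation reads $\langle B_{\psi},u\rangle=0$.

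Then I would differentiate $\langle B_{\psi},u\rangle=0$ along $\psi$. As $u$ is constant, only $B_{\psi}$ is differentiated, and the modified frame rule $B_{\psi}^{\prime}=-\tau_{\psi}N_{\psi}+\frac{\kappa_{\psi}^{\prime}}{\kappa_{\psi}}B_{\psi}$ yields
\[
-\tau_{\psi}\langle N_{\psi},u\rangle+\frac{\kappa_{\psi}^{\prime}}{\kappa_{\psi}}\langle B_{\psi},u\rangle=0 .
\]
The second term drops out by the previous step, leaving $\tau_{\psi}\langle N_{\psi},u\rangle=0$. If $\langle N_{\psi},u\rangle\neq0$ this forces $\tau_{\psi}=0$, and feeding $\tau_{\psi}=0$ into (3.12) gives $\kappa_{\psi}=0$, i.e. $\psi$ is a straight line.

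The point that needs genuine care --- and which the bare computation hides --- is the nonvanishing of $\langle N_{\psi},u\rangle$. I would settle it by a dichotomy: if $\langle N_{\psi},u\rangle=0$ as well, then together with $\langle B_{\psi},u\rangle=0$ and the orthogonality relations (2.4) the constant vector $u$ is forced to be proportional to $T_{\psi}$ at every point, so $T_{\psi}$ has constant direction and $\psi$ is already a line; otherwise $\langle N_{\psi},u\rangle\neq0$ and the previous paragraph applies. A final subtlety is that the modified frame degenerates at the isolated zeros of $\kappa_{\psi}$ (where $N_{\psi}=B_{\psi}=0$), but because $\kappa_{\psi}$ varies analytically the identity $\kappa_{\psi}\equiv0$ extends across those points, so $\psi$ is a straight line everywhere. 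I expect this case split, rather than the differentiation itself, to be the main obstacle.
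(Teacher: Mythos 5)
Your proposal is correct and follows essentially the same route as the paper: derive $\left\langle N_{\varphi},u\right\rangle=\left\langle B_{\psi},u\right\rangle=0$ from the helix axis and the Mannheim relation, differentiate $\left\langle B_{\psi},u\right\rangle=0$ with the modified frame equations to force $\tau_{\psi}=0$, and then read off $\kappa_{\psi}=0$ from Eqn.\ (3.12). The one place you go beyond the paper is the dichotomy on $\left\langle N_{\psi},u\right\rangle$: the paper simply asserts $\left\langle N_{\psi},u\right\rangle\neq 0$, whereas your observation that $\left\langle N_{\psi},u\right\rangle=0$ would force $u$ parallel to $T_{\psi}$ (hence $\psi$ already a line) actually closes a gap in the published argument.
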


\section{\protect \bigskip Frenet Mannheim curves}

In this section, we characterize $FM$ curves, for that we begin with a lemma.

\begin{lemma}\label{lem1}
Suppose $\psi (s^{\star })$, $s^{\star }\in I^{\star }$ be a $FM$ curve with a FM conjugate $\varphi(s)$. We mark all the quantities of $\psi (s^{\star })$ with a asterisk and suppose
\begin{equation}
\psi (s^{\star })=\varphi (s)+\delta (s)B_{\varphi }(s). \tag{4.1}
\end{equation}%
Then the distance$(\left \vert \delta \right \vert)$ between corresponding
points of $\varphi (s)$, $\psi (s^{\star })$ is not constant i.e., $\delta =$
$c\kappa _{\varphi }$, $c\in R$ and $\left \langle T_{\varphi },T_{\psi }\right \rangle =\cos \theta $, where $\theta$ is a constant angle and
\begin{equation*}
\begin{array}{cc}
(i) & \sin \theta =-a\tau _{\varphi }\cos \theta \  \\ 
(ii) & \left( 1+\epsilon a\kappa _{\psi }\right) \sin \theta =a\tau _{\psi
}\cos \theta \\ 
(iii) & \cos ^{2}\theta =1+\epsilon a\kappa _{\psi }\  \  \  \\ 
(iv) & \sin ^{2}\theta =a^{2}\tau _{\varphi }\tau _{\psi }.\  \ 
\end{array}%
\end{equation*}
\end{lemma}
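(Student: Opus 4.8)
The plan is to mirror the computation used for the Mannheim pair in Theorem 3.3, transported to the FM incidence condition $B_{\varphi}\parallel N_{\psi}$. First I would differentiate the defining relation (4.1) with respect to $s$ and substitute $B_{\varphi}'=-\tau_{\varphi}N_{\varphi}+\frac{\kappa_{\varphi}'}{\kappa_{\varphi}}B_{\varphi}$ from the modified frame (2.3), obtaining
\[
T_{\psi}\frac{ds^{\star}}{ds}=T_{\varphi}-\delta\tau_{\varphi}N_{\varphi}+\Big(\delta'+\delta\frac{\kappa_{\varphi}'}{\kappa_{\varphi}}\Big)B_{\varphi}.
\]
Because $\psi$ is an FM curve its principal normal $N_{\psi}$ is collinear with $B_{\varphi}$, so $\langle T_{\psi},B_{\varphi}\rangle=0$. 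Taking the inner product of the displayed identity with $B_{\varphi}$ and using $\langle T_{\varphi},B_{\varphi}\rangle=\langle N_{\varphi},B_{\varphi}\rangle=0$, $\langle B_{\varphi},B_{\varphi}\rangle=\kappa_{\varphi}^{2}$ kills every term but the last, forcing $\delta'+\delta\kappa_{\varphi}'/\kappa_{\varphi}=0$. Hence $\delta\kappa_{\varphi}$ is a nonzero constant and $\delta$ is non-constant, exactly as in (3.15); this yields the first assertion about the separation.

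Next I would feed this value of $\delta$ back in to reduce the derivative identity to $T_{\psi}\,ds^{\star}/ds=T_{\varphi}-\frac{a\tau_{\varphi}}{\kappa_{\varphi}}N_{\varphi}$, where $a$ denotes the constant $\delta\kappa_{\varphi}$. Setting $\cos\theta:=\langle T_{\varphi},T_{\psi}\rangle$ and dotting this identity with $T_{\varphi}$ gives $\cos\theta=ds/ds^{\star}$ (using $\langle T_{\varphi},T_{\varphi}\rangle=1$); squaring the identity and invoking $\langle N_{\varphi},N_{\varphi}\rangle=\kappa_{\varphi}^{2}$ gives $(ds^{\star}/ds)^{2}=1+a^{2}\tau_{\varphi}^{2}$, whence $\tan^{2}\theta=a^{2}\tau_{\varphi}^{2}$ and relation (i), $\sin\theta=-a\tau_{\varphi}\cos\theta$, with the sign fixed by the orientation $\epsilon$. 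This parallels (3.17)--(3.20).

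For (ii)--(iv) I would pass to the associated orthonormal Frenet frames $t=T$, $n=N/\kappa$, $b=B/\kappa$. Since the FM condition reads $b_{\varphi}=\epsilon n_{\psi}$, the vectors $t_{\varphi},n_{\varphi}$ lie in $\mathrm{span}(t_{\psi},b_{\psi})$, so I may write $t_{\varphi}=\cos\theta\,t_{\psi}+\sin\theta\,b_{\psi}$ and, from $n_{\varphi}=b_{\varphi}\times t_{\varphi}$, $n_{\varphi}=\epsilon(\sin\theta\,t_{\psi}-\cos\theta\,b_{\psi})$. Differentiating $b_{\varphi}=\epsilon n_{\psi}$ with respect to $s$, expanding $b_{\varphi}'=-\tau_{\varphi}n_{\varphi}$ and $n_{\psi}'=(ds^{\star}/ds)(-\kappa_{\psi}t_{\psi}+\tau_{\psi}b_{\psi})$, and matching the $t_{\psi}$- and $b_{\psi}$-components produces $\kappa_{\psi}(ds^{\star}/ds)=\tau_{\varphi}\sin\theta$ and $\tau_{\psi}(ds^{\star}/ds)=\tau_{\varphi}\cos\theta$. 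Eliminating $\tau_{\varphi}$ by (i) and using $ds^{\star}/ds=1/\cos\theta$ then delivers (iii) $\cos^{2}\theta=1+\epsilon a\kappa_{\psi}$ and (iv) $\sin^{2}\theta=a^{2}\tau_{\varphi}\tau_{\psi}$, and their combination gives (ii).

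The step I expect to be the main obstacle is the assertion that $\theta$ is constant. Differentiating the tangent decomposition $t_{\varphi}=\cos\theta\,t_{\psi}+\sin\theta\,b_{\psi}$ and comparing with $t_{\varphi}'=\kappa_{\varphi}n_{\varphi}$ shows that the $t_{\psi}$- and $b_{\psi}$-components both return $\theta'=\pm\epsilon\kappa_{\varphi}$, while the $n_{\psi}$-component returns the compatibility $\cos\theta\,\kappa_{\psi}=\sin\theta\,\tau_{\psi}$. Thus the constancy of $\theta$ is equivalent to the vanishing of the rotation rate of the two frames, and establishing it is the delicate point: one must argue that the admissible FM configuration forces this rate to vanish, and it is precisely here that the orientation $\epsilon=\pm1$ and the sign of the $b_{\psi}$-component in the frame expansion have to be pinned down consistently with (i)--(iv). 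I expect essentially all of the genuine work to sit in this bookkeeping around $\theta$, the projections yielding the four displayed identities being routine once the frames are set up.
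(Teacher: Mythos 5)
Your derivation of $\delta=a/\kappa_{\varphi}$ and of relation (i) coincides with the paper's own steps (Eqns.\ (4.2)--(4.6)), and your route to (ii)--(iv) --- differentiating $b_{\varphi}=\epsilon n_{\psi}$ and matching the $t_{\psi}$- and $b_{\psi}$-components --- is an equivalent substitute for the paper's, which instead differentiates the reversed relation $\varphi=\psi-\epsilon\delta N_{\psi}$ to obtain Eqn.\ (4.7) and compares it with $T_{\varphi}=T_{\psi}\cos\theta-\epsilon B_{\psi}\sin\theta/\kappa_{\psi}$ (Eqn.\ (4.8)). Both give the same pair of scalar identities, up to sign ambiguities that are already present in the paper itself. (Incidentally, both you and the paper's proof arrive at $\delta=a/\kappa_{\varphi}$, not the ``$\delta=c\kappa_{\varphi}$'' printed in the lemma; that is a typo in the statement.)

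The genuine gap is exactly the point you flag and then leave open: the constancy of $\theta$. All of (ii)--(iv) are predicated on writing $T_{\psi}=T_{\varphi}\cos\theta+N_{\varphi}\sin\theta/\kappa_{\varphi}$ with a single constant $\theta$, and you supply no argument for this. The paper closes it by computing $\langle T_{\varphi},T_{\psi}\rangle'=\langle N_{\varphi},T_{\psi}\rangle+\langle T_{\varphi},N_{\psi}\rangle\,\frac{ds^{\star}}{ds}$ and noting that the second summand vanishes because $N_{\psi}$ is parallel to $B_{\varphi}$, which is orthogonal to $T_{\varphi}$; it then asserts the whole expression is zero. You should be aware that even this is thinner than it looks: by Eqn.\ (4.3) the first summand equals $-a\tau_{\varphi}\kappa_{\varphi}\,\frac{ds}{ds^{\star}}$, so its vanishing is not automatic, and your instinct that this is the delicate point is sound. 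But a complete proof must either reproduce the paper's cancellation argument or explain why $\langle N_{\varphi},T_{\psi}\rangle=0$ in the FM configuration; as written, your proposal establishes the pointwise projection identities but not the lemma's claim that $\theta$ is a constant angle, so it does not yet prove the statement.
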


\begin{proof}
\bigskip \ From Eqn. (4.1), we have 
\begin{equation*}
\delta (s)=\left \langle \psi (s^{\star })-\varphi (s),B_{\varphi }(s)\right
\rangle,
\end{equation*}%
where $\delta(s)$ is of class $C^{\infty }$. Differentiating Eqn. (4.1) with respect to $s$, we get
\begin{equation*}
T_{\psi }\frac{ds^{\star }}{ds}=T_{\varphi }+\delta ^{\prime }B_{\varphi
}+\delta (-\tau _{\varphi }N_{\varphi }+\frac{\kappa ^{\prime }}{\kappa }%
B_{\varphi })
\end{equation*}%
or%
\begin{equation}
T_{\psi }\frac{ds^{\star }}{ds}=T_{\varphi }-\tau _{\varphi }\delta
N_{\varphi }+\left( \delta ^{\prime }+\delta \frac{\kappa _{\varphi
}^{\prime }}{\kappa _{\varphi }}\right) B_{\varphi }.  \tag{4.2}
\end{equation}%
By the given conditions, we have $B_{\varphi }=\epsilon N_{\psi }$ with $\epsilon
=\pm 1$. Taking the scalar multiplication of Eqn. (4.2) with $B_{\varphi }$, we obtain
\begin{equation*}
\frac{\delta ^{\prime }}{\delta }=-\frac{\kappa _{\varphi }^{\prime }}{%
\kappa _{\varphi }}\Rightarrow \delta =\frac{a}{\kappa _{\varphi }},a\in R.
\end{equation*}%
Therefore%
\begin{equation}
T_{\psi }\frac{ds^{\star }}{ds}=T_{\varphi }-\frac{a\tau _{\varphi }}{\kappa
_{\varphi }}N_{\varphi }.  \tag{4.3}
\end{equation}%
Now by the definition of $FM$ curve, we have $\frac{ds^{\star }}{ds}\neq 0$, so
that $T_{\psi }$ is a functions of $s$ of class $C^{\infty }$. Hence%
\begin{equation*}
\left \langle T_{\varphi },T_{\psi }\right \rangle ^{\prime }=\left \langle
N_{\varphi },T_{\psi }\right \rangle +\left \langle T_{\varphi },N_{\psi
}\right \rangle \frac{ds^{\star }}{ds}=0.
\end{equation*}%
This implies that $\left \langle T_{\varphi },T_{\psi }\right \rangle $ is
constant, thus there exists a constant angle $\theta$, such that%
\begin{equation}
T_{\psi }=T_{\varphi }\cos \theta +N_{\varphi }\frac{\sin \theta }{\kappa
_{\varphi }}.  \tag{4.4}
\end{equation}%
From Eqn. (4.3) and Eqn. (4.4), we get 
\begin{equation*}
\left( \frac{ds}{ds^{\star }}-\cos \theta \right) T_{\psi }-\left( \frac{ds%
}{ds^{\star }}\frac{a.\tau _{\varphi }}{\kappa _{\varphi }}+\frac{\sin \theta 
}{\kappa _{\varphi }}\right) N_{\psi }=0.
\end{equation*}%
Since $T_{\psi }$ and $N_{\psi }$ are linearly independent vectors, we have 
\begin{equation}
\frac{ds}{ds^{\star }}=\cos \theta   \tag{4.5}
\end{equation}%
and%
\begin{equation*}
\frac{ds^{\star }}{ds}\sin \theta =-a\tau _{\varphi }.\  \  \  \  \  \  \  \  \  \  \ 
\end{equation*}%
So using Eqn. (4.5) in the last equality, we get 
\begin{equation}
\sin \theta =-a\tau _{\varphi }\cos \theta ,\   \tag{4.6}
\end{equation}%
which is $(i)$. Now write%
\begin{equation*}
\varphi (s)=\psi (s^{\star })-\epsilon \delta (s)N_{\psi }(s).
\end{equation*}%
The above equation implies that
\begin{eqnarray*}
T_{\varphi } &=&\frac{ds^{\star }}{ds}\left[ T_{\psi }-\epsilon \delta
^{^{\prime }}N_{\psi }-\epsilon \delta \left( -\kappa _{\psi
}^{2}T_{\psi }+\frac{\kappa _{\psi }^{\prime }}{\kappa _{\psi }}N_{\psi
}+\tau _{\psi }B_{\psi }\right) \right] \\
\textrm{or}\\
T_{\varphi } &=&\frac{ds^{\star }}{ds}\left[ \left( 1+\epsilon \delta \kappa
_{\psi }^{2}\right) T_{\psi }-\epsilon \left( \delta ^{^{\prime
}}+\delta \frac{\kappa _{\psi }^{\prime }}{\kappa _{\psi }}\right)
N_{\psi }-\epsilon \delta \tau _{\psi }B_{\psi }\right].
\end{eqnarray*}%
Using $\dfrac{\delta ^{\prime }}{\delta }=-\dfrac{\kappa
_{\psi }^{\prime }}{\kappa _{\psi }}$, it follows that%
\begin{equation}
T_{\varphi }=\frac{ds^{\star }}{ds}\left[ \left( 1+\epsilon a\kappa _{\psi
}\right) T_{\psi }-\epsilon a\frac{\tau _{\psi }}{\kappa _{\psi }}%
B_{\psi }\right] .  \tag{4.7}
\end{equation}%
On the other hand, Eqn. (4.4) gives%
\begin{eqnarray*}
T_{\psi }\wedge N_{\psi } &=&\left[ T_{\varphi }\cos \theta +N_{\varphi }%
\frac{\sin \theta }{\kappa _{\varphi }}\right] \wedge N_{\psi }=\left[
T_{\varphi }\cos \theta +N_{\varphi }\frac{\sin \theta }{\kappa _{\varphi }}%
\right] \wedge \left( \epsilon B_{\varphi }\right) \\
&=&\epsilon \left( T_{\varphi }\wedge B_{\varphi }\right) \cos \theta
+\epsilon \left( N_{\varphi }\wedge B_{\varphi }\right) \frac{\sin \theta }{%
\kappa _{\varphi }}=-\epsilon N_{\varphi }\cos \theta +\epsilon T_{\varphi
}\kappa _{\varphi }\sin \theta \\
&=&-\epsilon N_{\varphi }\cos \theta +\epsilon T_{\varphi }\kappa _{\varphi
}^{2}\frac{\sin \theta }{\kappa _{\varphi }}=\epsilon T_{\varphi }\sin \theta
-\epsilon N_{\varphi }\kappa _{\varphi }\cos \theta .\\
\Rightarrow  B_{\psi }&=&\epsilon T_{\varphi }\sin \theta -\epsilon N_{\varphi }\cos \theta .
\end{eqnarray*}%
Using Eqn. (4.5) again, we get%
\begin{equation}
T_{\varphi }=T_{\psi }\cos \theta -\epsilon B_{\psi }\frac{\sin \theta }{%
\kappa _{\psi }}.  \tag{4.8}
\end{equation}%
Taking the vector product of Eqn. (4.5) and Eqn. (4.6), we obtain%
\begin{equation*}
\left( 1+\epsilon a\kappa _{\psi }\right) \sin \theta =a\tau _{\psi }\cos
\theta ,
\end{equation*}%
which is $(ii)$. \ On the other hand from Eqn. (4.7) and Eqn. (4.8), it
follows that
\begin{equation}
\frac{ds^{\star }}{ds}\left( 1+a\epsilon \kappa _{\psi }\right) =\cos
\theta ,  \tag{4.9}
\end{equation}%
\begin{equation}
\frac{ds^{\star }}{ds}(a\tau _{\psi })=\sin \theta .\  \  \  \  \  \  \  \  \  \  
\tag{4.10}
\end{equation}%
Thus inserting Eqn. (4.5) in Eqn. (4.9) and using Eqn. (4.6) in Eqn. (4.10) we
get $(iii)$ and $(iv)$ respectively.
\end{proof}

\begin{theorem}
Let $\psi (s^{\star })\in C^{\infty}$, $s^{\star }\in I^{\star }$ be  a 
Frenet curve with $\tau _{\psi }$ nowhere vanishing and satisfying:
\begin{equation}
\left( 1+a\epsilon \kappa _{\psi }\right) \sin \theta =a\tau _{\psi }\cos
\theta \tag{4.11}
\end{equation}
for some constant $a\neq 0$. Then $\psi (s^{\star })$ is a
$FM$ curve which is non-planar.%
\end{theorem}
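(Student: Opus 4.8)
The plan is to establish the converse of Lemma~\ref{lem1} in the natural way: from the given curve $\psi$ I would manufacture a candidate FM conjugate $\varphi$ and then verify the single incidence condition that defines an FM pair, namely that the binormal of $\varphi$ and the principal normal of $\psi$ lie on the joining line, $B_\varphi=\epsilon N_\psi$ with $\epsilon=\pm1$. Reading the representation of Lemma~\ref{lem1} backwards, I would define
\[
\varphi(s^{\star})=\psi(s^{\star})-\epsilon\,\frac{a}{\kappa_\psi(s^{\star})}\,N_\psi(s^{\star}),
\]
and take $\theta$ to be the constant angle occurring in the hypothesis (4.11). The coefficient $a/\kappa_\psi$ is dictated by the requirement $\delta'/\delta=-\kappa_\psi'/\kappa_\psi$, which is exactly what makes the $N_\psi$-term disappear after one differentiation.

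First I would differentiate this definition with respect to $s^{\star}$ and substitute the modified orthogonal frame equations (2.3) for $\psi$; after collecting terms the principal-normal contributions cancel and one obtains
\[
T_\varphi\,\frac{ds}{ds^{\star}}=\left(1+\epsilon a\kappa_\psi\right)T_\psi-\epsilon a\,\frac{\tau_\psi}{\kappa_\psi}\,B_\psi,
\]
which is precisely Eqn.~(4.7). In particular $T_\varphi$ lies in the $T_\psi$--$B_\psi$ plane, so $\langle T_\varphi,N_\psi\rangle=0$. Pairing this expression with itself and using the length relations (2.4), then feeding in (4.11), isolates $ds/ds^{\star}=\cos\theta$ and recovers the scalar identities (4.9)--(4.10). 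I would then differentiate once more, use (2.3) and (4.11) to express $N_\varphi$, and form $B_\varphi=T_\varphi\times N_\varphi$; with the modified-frame cross-product rules $T_\varphi\times B_\varphi=-N_\varphi$ and $N_\varphi\times B_\varphi=\kappa_\varphi^{2}T_\varphi$ together with (2.4), the in-plane terms should cancel and leave $B_\varphi$ a scalar multiple of $N_\psi$, i.e. $B_\varphi=\epsilon N_\psi$. This is exactly the defining property of an FM conjugate, so $\{\varphi,\psi\}$ is an FM pair and $\psi(s^{\star})$ is an FM curve.

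It then remains to dispose of the regularity and non-planarity clauses. Because $\tau_\psi$ is nowhere zero and $a\neq0$, relation (4.11) rules out $\cos\theta=0$ --- otherwise $a\tau_\psi\cos\theta=0$ would force $\sin\theta=0$ as well --- so $\theta$ is a genuine angle and $ds/ds^{\star}=\cos\theta\neq0$; hence the correspondence $s\leftrightarrow s^{\star}$ is a bona fide bijection, $\varphi'$ is non-vanishing, and $\varphi$ is a Frenet curve with $\kappa_\varphi\neq0$, as the definition of an FM conjugate demands. Non-planarity of $\psi$ is then immediate, since a nowhere-vanishing torsion precludes $\psi$ from lying in a plane. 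I expect the real work to be concentrated in the second paragraph: carrying out the second differentiation and the two cross-product reductions in the \emph{non-unit} modified frame, where $N$ and $B$ have length $\kappa$ rather than $1$, so that each vector identity and each inner product carries an extra factor of $\kappa_\psi$ or $\kappa_\varphi$ that must be tracked precisely to certify that $B_\varphi$ is truly aligned with $N_\psi$ rather than merely orthogonal to $T_\varphi$.
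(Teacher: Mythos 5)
Your proposal takes the opposite direction from the paper and, more importantly, its key step does not go through. The paper does not build $\varphi$ out of $\psi$ and then check alignment of the canonical frames; it starts from a conjugate, sets $\psi=\varphi+\frac{a}{\kappa_\varphi}B_\varphi$, and then \emph{defines} the frame of $\psi$ by decree --- $\frac{N_\psi}{\kappa_\psi}:=\epsilon\frac{B_\varphi}{\kappa_\varphi}$ and $B_\psi:=T_\psi\wedge B_\varphi$, with $\kappa_\psi$ read off from Eqn.\ (4.12) --- so that the incidence condition holds by construction, exploiting the clause ``by appropriate selection of the Frenet frame'' in the definition of an FM curve. You instead compute the honest binormal $B_\varphi=T_\varphi\times N_\varphi$ of your candidate conjugate and assert that the in-plane terms cancel. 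They do not in general. Writing $u=1+\epsilon a\kappa_\psi$ and $v=-\epsilon a\tau_\psi$, your construction gives $\varphi'=u\,t_\psi+v\,b_\psi$ in the unit frame, hence $\varphi''=u't_\psi+(u\kappa_\psi-v\tau_\psi)n_\psi+v'b_\psi$ and
\[
\varphi'\times\varphi''=-v\left(u\kappa_\psi-v\tau_\psi\right)t_\psi+\left(vu'-uv'\right)n_\psi+u\left(u\kappa_\psi-v\tau_\psi\right)b_\psi .
\]
For $B_\varphi$ to be parallel to $N_\psi$ you need $u\kappa_\psi-v\tau_\psi=0$, i.e.\ the pointwise Mannheim-type identity $\kappa_\psi+\epsilon a\left(\kappa_\psi^{2}+\tau_\psi^{2}\right)=0$. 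Hypothesis (4.11) with constant $\theta$ only gives $\left(1+\epsilon a\kappa_\psi\right)/\left(a\tau_\psi\right)=\cot\theta$, which is strictly weaker; imposing both forces $\kappa_\psi$ and $\tau_\psi$ to be constant (the circular-helix case of Lemma \ref{lem 2}). So the cancellation you rely on cannot be certified from (4.11) alone, and without the paper's re-choice of frame the alignment fails whenever $\kappa_\psi$ is non-constant. This is the genuine gap.

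Two smaller points. Your exclusion of the degenerate angle is argued backwards: if $\cos\theta=0$ then $\sin\theta=\pm1$ and (4.11) forces $1+a\epsilon\kappa_\psi=0$, not $\sin\theta=0$; what (4.11) together with $\tau_\psi\neq0$ and $a\neq0$ actually rules out is $\sin\theta=0$, which is the observation the paper makes. Also, in your direction of construction the norm computation yields $ds/ds^{\star}=\sqrt{u^{2}+v^{2}}=\left|a\tau_\psi\right|/\left|\sin\theta\right|$, not $\cos\theta$; the identity $ds/ds^{\star}=\cos\theta$ is a \emph{conclusion} of Lemma \ref{lem1} for pairs already known to be FM pairs and may not be imported into the converse.
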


\begin{proof}
Define the position vector of curve $\psi (s^{\star })$ as follows
\begin{equation*}
\psi (s^{\star })=\varphi (s)+\frac{a}{\kappa (s)}B_{\varphi }(s).
\end{equation*}%
Let's denote differentiation with respect to $s$ by a dash, we have%
\begin{equation*}
\psi ^{\prime }(s^{\ast })=T_{\varphi }-\frac{a\tau _{\varphi }}{\kappa
_{\varphi }}N_{\varphi }.
\end{equation*}%
Since $\tau _{\varphi }\neq 0$, we see that $\psi (s^{\star })$ is a $%
C^{\infty }$ regular curve. Suppose all the quantities of $\psi (s^{\star })$
be marked by a asterisk. Then%
\begin{equation*}
T_{\psi }\frac{ds^{\star }}{ds}=T_{\varphi }-\frac{a\tau _{\varphi }}{\kappa
_{\varphi }}N_{\varphi }.
\end{equation*}%
Hence, we have%
\begin{equation*}
\frac{ds^{\star }}{ds}=\sqrt{1-a^{2}\tau _{\varphi }^{2}}.
\end{equation*}%
Using Eqn. (4.11), we get 
\begin{equation*}
T_{\psi }=T_{\varphi }\cos \theta +N_{\varphi }\frac{\sin \theta }{\kappa
_{\varphi }},
\end{equation*}%
notice that from Eqn. (4.11), we have $\sin \theta \neq 0$. Therefore%
\begin{equation*}
\frac{dT_{\psi }}{ds^{\star }}\frac{ds^{\star }}{ds}=N_{\varphi }\cos \theta
+\frac{\sin \theta }{\kappa _{\varphi }}\left( -\kappa _{\varphi
}^{2}T_{\varphi }+\frac{\kappa _{\varphi }^{\prime }}{\kappa _{\varphi }}%
N_{\varphi }+\tau _{\varphi }B_{\varphi }\right) -\frac{\kappa _{\varphi
}^{\prime }}{\kappa _{\varphi }^{2}}\sin \theta N_{\varphi }
\end{equation*}%
or%
\begin{equation}
\kappa _{\psi }\frac{ds^{\star }}{ds}\frac{N_{\psi }}{\kappa _{\psi }}%
=-\left( \kappa _{\varphi }\sin \theta \right) T_{\varphi }+N_{\varphi }\cos
\theta +\allowbreak \left( \tau _{\varphi }\sin \theta \right) \frac{%
B_{\varphi }}{\kappa _{\varphi }}.  \tag{4.12}
\end{equation}%
Now define $\frac{N_{\psi }}{\kappa _{\psi }}=\epsilon \frac{B_{\varphi }}{%
\kappa _{\varphi }}$. Then from Eqn. (4.12), we get%
\begin{equation*}
\kappa _{\psi }=\epsilon \frac{ds}{ds^{\star }}\tau _{\varphi }\sin \theta .
\end{equation*}%
These are $C^{\infty }$ functions of $s$ (and hence of $s^{\star }$), and%
\begin{equation*}
\frac{dT_{\psi }}{ds^{\star }}=N_{\psi }.
\end{equation*}%
Again define $B_{\psi }=T_{\psi }\wedge B_{\varphi }$ and 
\begin{eqnarray*}
&&\left \langle \frac{dB_{\psi }}{ds^{\star }},N_{\psi }\right \rangle
=\left \langle -\tau _{\psi }N_{\psi }+\frac{\kappa _{\psi }^{\prime }}{%
\kappa _{\psi }}B_{\psi },N_{\psi }\right \rangle =-\tau _{\psi }\left
\langle N_{\psi },N_{\psi }\right \rangle =-\tau _{\psi }\kappa _{\psi
}^{2}\text{ }\\
&&\textrm{or}\\
&&\tau _{\psi }=-\frac{\left \langle \frac{dB_{\psi }}{ds^{\star }},N_{\psi
}\right \rangle }{\kappa _{\psi }^{2}}.
\end{eqnarray*}%
These are also of class $C^{\infty }$ on $I^{\star }$. It is then trivial to
check that with the functions $\kappa _{\psi },\tau _{\psi }$ and with modified frame $\left \{ T_{\psi },N_{\psi },B_{\psi
}\right\} $ , the curve 
$\psi (s^{\star })$ becomes a $C^{\infty }$ Frenet curve. But $B_{\varphi }$
and $N_{\psi }$ lie on the joining line of corresponding points of $\varphi (s)$
and $\psi (s^{\star })$. Thus $\varphi(s)$ is a $FM$ conjugate of a $FM$ curve $\psi (s^{\star })$.
\end{proof}

\begin{lemma}\label{lem 2}
A necessary and sufficient condition for a regular curve $%
\psi \in C^{\infty } $ to be a $FM$ curve with a $FM$ conjugate is that $\psi $ is either a line or a non-planar circular helix.
\end{lemma}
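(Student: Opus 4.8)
The plan is to derive everything from Lemma~\ref{lem1} and the preceding theorem, so that this lemma becomes essentially a rigidity statement: an $FM$ curve admitting an $FM$ conjugate must have \emph{constant} curvature and torsion, and the classification of such curves then yields the stated dichotomy. The two directions are therefore quite asymmetric---necessity is an extraction of constancy from the angle relations, while sufficiency is a verification of the hypotheses of the preceding theorem.

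For the necessity I would start from an $FM$ curve $\psi$ with $FM$ conjugate $\varphi$ and invoke Lemma~\ref{lem1}, which supplies the constant angle $\theta$ and the relations $(i)$--$(iv)$. Because $\theta$ is constant, relation $(iii)$, $\cos^{2}\theta=1+\epsilon a\kappa_{\psi}$, forces $\kappa_{\psi}$ to be constant (recall $a\neq0$). Next I would rule out $\cos\theta=0$: if $\cos\theta=0$ then $\sin\theta=\pm1$, yet $(i)$ reads $\sin\theta=-a\tau_{\varphi}\cos\theta=0$, a contradiction; hence $\cos\theta\neq0$. Feeding $(iii)$ into $(ii)$ then gives $a\tau_{\psi}\cos\theta=\cos^{2}\theta\sin\theta$, i.e. $a\tau_{\psi}=\cos\theta\sin\theta$, so $\tau_{\psi}$ is constant as well. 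At this point $\psi$ is a $C^{\infty}$ space curve of constant curvature and constant torsion, so by the fundamental theorem of space curves it is, up to a rigid motion, a line, a circle, or a circular helix.

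To pin down which, I would split on $\sin\theta$. If $\sin\theta=0$, then $(iii)$ gives $\kappa_{\psi}=0$ and $\psi$ is a line. If $\sin\theta\neq0$, then $\kappa_{\psi}=-\sin^{2}\theta/(\epsilon a)\neq0$ and, since also $\cos\theta\neq0$, $\tau_{\psi}=\cos\theta\sin\theta/a\neq0$, so $\psi$ is a non-planar circular helix. The only point needing care is that the planar-circle case genuinely does not occur: $\tau_{\psi}=0$ together with $\cos\theta\neq0$ forces $\sin\theta=0$, hence $\kappa_{\psi}=0$, collapsing the ``circle'' into a line. This completes the necessity.

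For the sufficiency I would treat the two admissible shapes separately. If $\psi$ is a non-planar circular helix, then $\kappa_{\psi},\tau_{\psi}$ are nonzero constants with $\tau_{\psi}$ nowhere vanishing, so it suffices to produce constants $a\neq0$ and $\theta$ satisfying the hypothesis $(1+a\epsilon\kappa_{\psi})\sin\theta=a\tau_{\psi}\cos\theta$ of the preceding theorem; since this is a single relation in the two unknowns $a,\theta$, I can solve it, e.g. by fixing $\theta$ with $\sin\theta\neq0$ and setting $a=\sin\theta/(\tau_{\psi}\cos\theta-\epsilon\kappa_{\psi}\sin\theta)$, and that theorem then certifies that $\psi$ is an $FM$ curve with an $FM$ conjugate. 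The main obstacle is the line case, where the modified frame degenerates ($\kappa_{\psi}=0$ makes $N_{\psi}=B_{\psi}=0$) and the construction of the preceding theorem, which presupposes $\tau_{\psi}\neq0$, does not directly apply; here I would instead verify the defining incidence by hand, exhibiting a conjugate $\varphi$ for which $B_{\varphi}$ and the principal normal of $\psi$ lie along the joining line, treating the line as the limiting configuration $\theta\to0$ of the helix construction.
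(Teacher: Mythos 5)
Your proposal is correct in substance, and in the necessity direction it is essentially the paper's own argument: the paper likewise extracts $\cos^{2}\theta=1+a\epsilon\kappa_{\psi}$ and $a\tau_{\psi}=\cos\theta\sin\theta$ from Lemma~\ref{lem1}, rules out $\cos\theta=0$, and splits on $\sin\theta$ to obtain the line/non-planar-helix dichotomy; your explicit appeal to the fundamental theorem of curves and your explicit exclusion of the planar circle ($\tau_{\psi}=0\Rightarrow\sin\theta=0\Rightarrow\kappa_{\psi}=0$) only make precise what the paper leaves implicit. Note, though, that the paper's forward direction actually opens with ``suppose a line $\varphi$ be a FM conjugate of $\psi$,'' i.e.\ it tacitly proves the lemma for rectilinear conjugates (the hypothesis $\kappa_{\varphi}=0$ is then never really used); your argument never needs that assumption and so proves the claim as literally stated, which is legitimate since Lemma~\ref{lem1} holds for arbitrary FM conjugates. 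In the sufficiency direction you take a genuinely different route: the paper writes down the standard parametrization $\psi=(r\cos t,r\sin t,bt)$, computes the modified frame by hand, and exhibits the $z$-axis as a rectilinear conjugate via $\psi=\varphi-\frac{r}{\kappa_{\varphi}}B_{\varphi}$, whereas you solve the single relation $(1+a\epsilon\kappa_{\psi})\sin\theta=a\tau_{\psi}\cos\theta$ for $a$ and invoke Theorem~4.2. Your route is shorter and coordinate-free, at the cost of inheriting whatever weaknesses Theorem~4.2's (rather garbled) proof has; the paper's route is self-contained and produces the conjugate explicitly. The one real soft spot is the line case of sufficiency: there $\kappa_{\psi}\equiv 0$ degenerates the modified frame ($N_{\psi}=B_{\psi}=0$), your ``limiting configuration'' remark is only a sketch, and the paper's own proof omits this case entirely -- so you are no worse off than the source, but an airtight version should exhibit the conjugate and the ``appropriate selection of the Frenet frame'' for a straight line explicitly.
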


\begin{proof}
$\Rightarrow :$ Suppose a line $\varphi$  be a $FM$ conjugate of $\psi$. This implies $\kappa _{\varphi }=0.$ Using Lemma \ref{lem1}, $(iii)$ and $(i)$, $(ii)$,
we have%
\begin{equation}
\cos ^{2}\theta =1+a\epsilon \kappa _{\psi }  \tag{4.13}
\end{equation}%
and then%
\begin{equation}
\cos ^{2}\theta \sin \theta =a\tau _{\psi }\cos \theta ,~\  \  \  \  \  
\tag{4.14}
\end{equation}%
\begin{equation}
\cos \theta =-a\tau _{\varphi }\sin \theta .\  \  \  \  \  \  \  \  \  \  \  \  \  
\tag{4.15}
\end{equation}%
From Eqn. (4.15), it follows that $\cos \theta \neq 0$. Hence Eqn. (4.14) is
proportional to%
\begin{equation}
a\tau _{\psi }=\cos \theta \sin \theta .\  \  \  \  \  \  \  \  \  \  \  \  \  \  \  
\tag{4.16}
\end{equation}

\textbf{Case 1.} $\sin \theta =0$. Then $\cos \theta =\pm 1$, so that (4.13)
implies $\kappa _{\psi }=0$, and $\psi $ is a line. We  also note that
Eqn. (4.16) implies that $\tau _{\psi }=0$.

\textbf{Case 2.} $\sin \theta \neq 0$. Then $\cos \theta \neq \pm 1$, and
Eqns. (4.13),(4.16) imply that $\kappa _{\psi },\tau _{\psi }$ are non-vanishing
constants, and $\psi $ is a non-planar circular helix.

$\Leftarrow :$ \ Suppose $\psi $ be a non-planar circular helix given by $$\psi =\left(
r\cos t,r\sin t,bt\right) ,\left( a,b\in R^{+}\right), $$ where $t=\dfrac{s}{%
\sqrt{r^{2}+b^{2}}}.$\
\newline
 We may write
\begin{equation*}
\psi ^{^{\prime }}(s)=T_{\psi }=\frac{r}{\sqrt{r^{2}+b^{2}}}\left( -\sin
t,\cos t,k\right)
\end{equation*}%
and 
\begin{equation*}
\psi ^{^{\prime \prime }}(s)=N_{\psi }=\frac{-r}{r^{2}+b^{2}}\left( \cos
t,\sin t,0\right).
\end{equation*}
\begin{equation*}\Rightarrow \kappa _{\psi }=\frac{r}{r^{2}+b^{2}}\text{
and }-r\frac{N_{\psi }}{\kappa _{\psi }}=r\left( \cos t,\sin t,0\right).
\end{equation*}%
Then the curve $\psi $ with%
\begin{eqnarray*}
\psi &=&\left( r\cos t,r\sin t,bt\right) =r\left( \cos t,\sin t,0\right)
+\left( 0,0,bt\right) \\
&=&-\frac{r}{\kappa _{\psi }}N_{\psi }+\varphi =\varphi -\frac{r}{\kappa
_{\psi }}\left( \frac{\kappa _{\psi }}{\kappa _{\varphi }}B_{\varphi
}\right) =\varphi -\frac{r}{\kappa _{\varphi }}B_{\varphi }
\end{eqnarray*}%
or putting $\delta =-\dfrac{r}{\kappa _{\varphi }}$ 
\begin{equation*}
\psi =\varphi +\delta B_{\varphi }
\end{equation*}%
will be a line along the $z-$axis, and can be turned into a $FM$ conjugate of $%
\psi $ by defining $\frac{N_{\psi }}{\kappa _{\psi }}$ to be equal to $%
\frac{B_{\varphi }}{\kappa _{\varphi }}$.
\end{proof}

\begin{theorem}
\bigskip Let $\psi (s^{\star })\in C^{\infty }$ be a plane Frenet curve with zero torsion and the curvature  being bounded above or bounded below.
Then $\psi $ is a $FM$ curve, with $FM$ conjugates which are
curves in plane.
\end{theorem}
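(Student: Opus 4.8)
The plan is to produce the $FM$ conjugate explicitly as a constant offset of $\psi$ inside its own plane, and to read off all the required frame relations from the modified orthogonal frame equations specialized to $\tau_{\psi}=0$. First I would record the planar structure: since $\psi(s^{\star})$ is a plane Frenet curve its torsion vanishes, $\tau_{\psi}=0$, so Eqn.~(2.3) reduces $B_{\psi}$ to satisfy $B_{\psi}'=\frac{\kappa_{\psi}'}{\kappa_{\psi}}B_{\psi}$; hence the unit binormal $\frac{B_{\psi}}{\kappa_{\psi}}$ is a fixed vector and $\psi$ lies in a fixed plane $P$ spanned pointwise by $T_{\psi}$ and $N_{\psi}$. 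This is the geometric reason to search for the conjugate inside $P$.

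Next I would construct the candidate conjugate by a constant offset along the principal normal,
\[
\varphi(s^{\star})=\psi(s^{\star})+\lambda\,\frac{N_{\psi}(s^{\star})}{\kappa_{\psi}(s^{\star})},\qquad \lambda\in R,\ \lambda\neq0,
\]
where $\frac{N_{\psi}}{\kappa_{\psi}}$ is the unit principal normal. Because $\psi\subset P$ and $\frac{N_{\psi}}{\kappa_{\psi}}\in P$, the curve $\varphi$ again lies in $P$, so $\varphi$ is a \emph{plane curve}, which is exactly the conclusion claimed for the conjugate. Differentiating and using the modified frame with $\tau_{\psi}=0$ kills the $B_{\psi}$ contribution and yields $\varphi'=(1-\lambda\kappa_{\psi})T_{\psi}$; thus $T_{\varphi}=\pm T_{\psi}$ and the angle between the tangents is $0$ or $\pi$, precisely the degenerate value forced by Lemma~\ref{lem1}$(iv)$ once $\tau_{\psi}=0$.

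The boundedness hypothesis enters exactly to guarantee regularity of this offset. The curve $\varphi$ is regular iff $1-\lambda\kappa_{\psi}$ never vanishes, i.e.\ iff the constant $\lambda$ avoids the value $1/\kappa_{\psi}(s^{\star})$ for every $s^{\star}$. If $\kappa_{\psi}$ is bounded above by some $M$, choosing $0<\lambda<1/M$ forces $\lambda\kappa_{\psi}<1$; if $\kappa_{\psi}$ is bounded below by some $m>0$, choosing $\lambda>1/m$ forces $\lambda\kappa_{\psi}>1$. In either case $1-\lambda\kappa_{\psi}\neq0$, so $\varphi$ is a $C^{\infty}$ regular curve to which a modified frame can be attached.

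Finally I would check the Mannheim incidence, and this is the step I expect to be the main obstacle. The joining segment $\varphi-\psi=\lambda\frac{N_{\psi}}{\kappa_{\psi}}$ points along $N_{\psi}$, so the principal normal of $\psi$ lies on the joining line automatically. The delicate point is the binormal condition $B_{\varphi}\parallel N_{\psi}$: as a genuine plane curve in $P$, the honest binormal of $\varphi$ is orthogonal to $P$ and hence to $N_{\psi}$, so the two cannot be matched in the classical sense. I would resolve this exactly as in the helix computation of Lemma~\ref{lem 2}, where $\kappa_{\varphi}$ degenerates: one \emph{defines} the modified binormal direction of $\varphi$ by $\frac{B_{\varphi}}{\kappa_{\varphi}}:=\epsilon\frac{N_{\psi}}{\kappa_{\psi}}$ with $\epsilon=\pm1$, so that $B_{\varphi}$ and $N_{\psi}$ share the joining line in the modified-frame sense. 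With this identification $\varphi$ is a bona fide $FM$ conjugate of $\psi$ and is a curve in the plane $P$, which is what the theorem asserts.
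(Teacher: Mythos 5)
Your construction is essentially the paper's own: the paper likewise takes a constant offset along the unit principal normal, $\varphi=\psi-\varepsilon\,(a/\kappa_{\psi})N_{\psi}$, uses the boundedness of $\kappa_{\psi}$ to choose the constant so that $1+\varepsilon a\kappa_{\psi}$ never vanishes (hence $\varphi$ is regular), and concludes $T_{\varphi}=T_{\psi}$ with $\varphi$ lying in the same plane. The only divergence is at the final step: the paper dismisses the Mannheim incidence as ``easy to verify,'' whereas you correctly observe that for a plane conjugate with $\kappa_{\varphi}\neq 0$ the genuine binormal $B_{\varphi}=T_{\varphi}\wedge N_{\varphi}$ is orthogonal to the plane and so cannot lie along the in-plane joining line; be aware, though, that your patch of decreeing $B_{\varphi}/\kappa_{\varphi}=\epsilon N_{\psi}/\kappa_{\psi}$ is only legitimate where the frame is genuinely free (i.e.\ where $\kappa_{\varphi}=0$, as for the straight-line conjugate in Lemma~\ref{lem 2}), so this verification remains an unclosed step in your argument exactly as it is in the paper's.
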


\begin{proof}
Suppose $\psi $ be a curve satisfying the given conditions. Thent $\kappa _{\psi }<-\frac{1}{a}$ or $%
\kappa _{\psi }>-\frac{1}{a}$ on $I$ for some constant $a\neq 0$. For such $a$, let $\varphi$ be the curve with position vector 
\begin{equation*}
\varphi =\psi -\varepsilon \delta N_{\psi }.
\end{equation*}%
Differentiating last equality with respect to $s^{\ast }$ and
considering equality $\delta ^{^{\prime }}+\delta \dfrac{\kappa _{\varphi
}^{^{\prime }}}{\kappa _{\varphi }}=0$ and $\delta =\dfrac{a}{\kappa
_{\varphi }}$ $\left( a\in R\right),$ we get 
\begin{equation*}
T_{\varphi }\frac{ds}{ds^{\ast }}=T_{\psi }-\varepsilon \delta ^{^{\prime
}}N_{\psi }-\varepsilon \delta \left( -\kappa _{\psi }^{2}T_{\psi }+%
\frac{\kappa _{\psi }^{^{\prime }}}{\kappa _{\psi }}N_{\psi }\right)
=\left( 1+\varepsilon \delta \kappa _{\psi }^{2}\right) T_{\psi
}-\varepsilon \left( \delta ^{^{\prime }}+\delta \frac{\kappa _{\psi
}^{^{\prime }}}{\kappa _{\psi }}\right) N_{\psi }
\end{equation*}%
or
\begin{equation*}
T_{\varphi }=\frac{ds^{\ast }}{ds}\left( 1+\varepsilon a\kappa _{\psi
}\right) T_{\psi }.
\end{equation*}%
Since $\frac{ds^{\ast }}{ds}\left( 1+\varepsilon a\kappa _{\psi }\right)
\neq 0$, $\varphi $ is a $C^{\infty }$ regular curve, and $T_{\varphi
}=T_{\psi }$. It is then easy to verify that $\varphi $
is a $FM$ conjugate of $\psi $.
\end{proof}

\section{Weakened Mannheim curves}

\begin{definition}
Let $\mathcal{D}$ be a subset of a topological space $\mathcal{X}$. If a function: $\mathcal{X} \mapsto \mathcal{Y}$ is constant for each component of $\mathcal{D}$, we say that function is $\mathcal{D}$-piecewise constant [2].
\end{definition}

\begin{lemma}\label{lem 3}
Suppose $\mathcal{D}$ be an open subset of a proper interval $\mathcal{X}$ of the real line. A necessary and sufficient condition for every $\mathcal{D}$-piecewise constant real continuous function on $\mathcal{X}$ to be constant is that there is an empty dense-in-itself kernel of $\mathcal{X} \backslash \mathcal{D}$ [2].
\end{lemma}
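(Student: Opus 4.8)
The plan is to set $K=\mathcal{X}\backslash\mathcal{D}$, which is closed in $\mathcal{X}$ because $\mathcal{D}$ is open, and to recall that the dense-in-itself kernel $P$ of $K$ is the largest subset of $K$ having no isolated points; the hypothesis that this kernel is empty is exactly the statement that $K$ is scattered. Since $\mathcal{D}$ is open in the interval $\mathcal{X}$, each of its connected components is an open interval, so a $\mathcal{D}$-piecewise constant continuous function is precisely a continuous real function $f$ on $\mathcal{X}$ that is constant on each such interval. I would prove the two implications separately: sufficiency by a derived-set argument that confines the ``bad'' points inside $K$, and necessity (in contrapositive form) by manufacturing a generalized Cantor function.

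For sufficiency, assume $P=\emptyset$ and let $f$ be a $\mathcal{D}$-piecewise constant continuous function. Let $A$ be the set of points of $\mathcal{X}$ at which $f$ fails to be locally constant. Its complement is open, so $A$ is closed, and $A\subseteq K$, because every point of the open set $\mathcal{D}$ lies in a component of $\mathcal{D}$ on which $f$ is constant and is therefore a point of local constancy. The key step is to show that $A$ has no isolated points: if $x\in A$ were isolated in $A$, then on each one-sided punctured neighbourhood of $x$ the function $f$ would be locally constant, hence constant on that connected interval, and continuity at $x$ would force these one-sided constants to coincide with $f(x)$, making $f$ locally constant at $x$ and contradicting $x\in A$. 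Thus $A$ is a dense-in-itself subset of $K$, so $A\subseteq P=\emptyset$; consequently $f$ is locally constant at every point, and since $\mathcal{X}$ is connected, $f$ is constant.

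For necessity I argue the contrapositive, assuming $P\neq\emptyset$ and producing a non-constant $\mathcal{D}$-piecewise constant continuous function. First I would note that the closure $\overline{P}$ of a nonempty dense-in-itself set is a nonempty perfect set, and $\overline{P}\subseteq K$ since $K$ is closed; passing to a compact piece yields a nonempty compact perfect set $C\subseteq K$. The reduction I rely on is that every connected component of $\mathcal{D}$, being a connected subset of $\mathcal{X}\backslash C$, lies inside a single component of the complement of $C$, so any continuous function that is constant on the components of the complement of $C$ is automatically $\mathcal{D}$-piecewise constant. I would then take $f$ to be the cumulative distribution function of a nonatomic Borel probability measure carried by $C$, which exists because $C$ is perfect. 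This $f$ is continuous and non-decreasing, is constant on every component of the complement of $C$, and rises from $0$ to $1$ across $C$; hence it is a non-constant $\mathcal{D}$-piecewise constant continuous function on $\mathcal{X}$, completing the contrapositive.

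The delicate part is the necessity construction. Its combinatorial content is hidden in two facts: that a nonempty perfect set supports a nonatomic probability measure even when it has Lebesgue measure zero, so one cannot simply normalise Lebesgue measure and must instead use the Cantor-set encoding; and that the resulting singular function is genuinely constant on every complementary component while remaining globally non-constant. By contrast, the sufficiency direction is essentially forced once one observes that the non-locally-constant locus is closed, lies in $K$, and has no isolated points. I would also handle the endpoints of $\mathcal{X}$ in the dense-in-itself step, where only one-sided punctured neighbourhoods are available, but the same continuity argument applies there verbatim.
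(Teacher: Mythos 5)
The paper does not actually prove this lemma: it is quoted verbatim from Lai's paper on weakened Bertrand curves (reference [2]) and used as a black box, so there is no internal proof to compare yours against. On its own merits your argument is correct. The sufficiency direction is clean: the non-locally-constant locus $A$ is closed, sits inside $K=\mathcal{X}\backslash\mathcal{D}$, and your isolated-point argument (constancy on each punctured side plus continuity at the point) shows $A$ is dense-in-itself, hence contained in the dense-in-itself kernel and therefore empty; connectedness of $\mathcal{X}$ finishes it. The necessity direction via a Cantor--Lebesgue-type function is also right, and you correctly identify that one cannot just normalise Lebesgue measure on $C$. Two spots deserve one more sentence each if this were written out in full. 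First, when you pass from the nonempty dense-in-itself kernel $P$ to a compact perfect $C\subseteq K$, the closure of $P$ must be taken inside $\mathcal{X}$ (or $C$ must be confined to a compact subinterval of $\mathcal{X}$ around a point of $P$), since $\mathcal{X}$ need not be closed in $\mathbb{R}$ and the full closure of $P$ could pick up an endpoint outside $\mathcal{X}$; the standard ``every nonempty dense-in-itself set contains a Cantor set in any prescribed neighbourhood of one of its points'' handles this. Second, the existence of a nonatomic Borel probability measure carried by $C$ (equivalently, of the monotone singular function you want) should be justified, e.g.\ by pushing forward the coin-flipping measure under an embedding of the Cantor set into $C$; its support is then a perfect subset of $C$, which is all you need for $f$ to be constant on every component of $\mathcal{X}\backslash C$ and hence on every component of $\mathcal{D}$. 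Neither point is a gap, only an omission of standard facts.
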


We remark, however, if $\mathcal{D}$ is dense in $\mathcal{X}$, any $\mathcal{D}$%
-piecewise constant, $C^1$ real function on $\mathcal{X}$ is constant, even if $\mathcal{D}$ has a
non-empty dense-in-itself kernel.

\begin{theorem}
A $WM$ curve for which $M$ and $N$(defined in def. \ref{def 1}) have void dense-in-itself kernels is a $%
FM$ curve.
\end{theorem}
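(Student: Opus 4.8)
The strategy is to show that the exceptional sets $M$ and $N$ cannot actually occur, so that the reparametrization is a genuine diffeomorphism and the smooth $FM$ relations, valid on a dense set, hold everywhere. Put $\mathcal{D}=I\setminus(M\cup N)$. Since $M,N$ are closed with empty interiors, $\mathcal{D}$ is open and dense in $I$; moreover on $\mathcal{D}$ we have $\frac{ds^\ast}{ds}\neq 0$ and $\frac{ds}{ds^\ast}\neq 0$ and $\rho$ is a $C^\infty$ diffeomorphism. Thus on every connected component of $\mathcal{D}$ the pair $\{\varphi,\psi\}$ meets precisely the smooth hypotheses of Lemma~\ref{lem1} (the relation $\psi=\varphi+\delta B_\varphi$ of Eqn.~(4.1) being exactly the smooth restriction of the $WM$ conjugacy), so that $\langle T_\varphi,T_\psi\rangle=\cos\theta$ with $\theta$ locally constant, together with $\frac{ds}{ds^\ast}=\cos\theta$ as in Eqn.~(4.5).

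Next I would observe that $f(s):=\langle T_\varphi(s),T_\psi(\rho(s))\rangle$ is continuous on all of $I$, since the unit tangents of the regular curves $\varphi,\psi$ are continuous and $\rho$ is a homeomorphism; in particular $f$ remains continuous across $M$ and $N$. By the first paragraph $f$ is constant on each component of $\mathcal{D}$, i.e.\ $f$ is $\mathcal{D}$-piecewise constant. Because $M$ and $N$ are closed with void dense-in-itself kernels, a Baire-category argument (each being a complete metric space) shows that $\mathcal{X}\setminus\mathcal{D}=M\cup N$ also has void dense-in-itself kernel. Lemma~\ref{lem 3} then forces $f$ to be constant on $I$, so there is a single angle $\theta$ with $\langle T_\varphi,T_\psi\rangle\equiv\cos\theta$.

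Finally I would promote this to an honest $FM$ pair. On $\mathcal{D}$ we have $\frac{ds}{ds^\ast}=\cos\theta\neq 0$, whence $\cos\theta\neq 0$ and $\frac{ds^\ast}{ds}=\sec\theta$ is a nonzero constant on the dense open set $\mathcal{D}$. Applying Lemma~\ref{lem 3} a second time to the continuous, $\mathcal{D}$-piecewise constant function $s\mapsto s^\ast-(\sec\theta)s$ gives $s^\ast=(\sec\theta)s+\text{const}$ on all of $I$; hence $\rho$ is an affine $C^\infty$ diffeomorphism with nowhere-vanishing derivative, and therefore $M=N=\emptyset$. With $\rho$ smooth and regular, $\psi$ is a genuine Frenet curve, $\varphi'$ is nonvanishing, and the orthogonality and joining-line conditions of Definition~\ref{def 1} hold at every point; thus $\{\varphi,\psi\}$ is an $FM$ pair and $\psi$ is an $FM$ curve.

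The main obstacle is this last step: one must exclude the degenerate possibility $\cos\theta=0$ (which is ruled out by $\frac{ds}{ds^\ast}\neq 0$ on $\mathcal{D}$) and, more importantly, propagate the constancy of $\frac{ds^\ast}{ds}$ from the dense open set across the closed exceptional sets so as to conclude $M=N=\emptyset$. The two applications of Lemma~\ref{lem 3}, resting on the void-kernel hypothesis and on the continuity of $f$ and of $s^\ast(s)$ across $M$ and $N$, are precisely what remove these singularities.
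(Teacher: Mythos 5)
Your overall strategy is the same as the paper's: derive the Mannheim relations on the dense open set $\mathcal{D}$ where everything is smooth, use Lemma~\ref{lem 3} to propagate the resulting constants across the exceptional sets, and conclude $M=N=\emptyset$. Two of your ingredients are sound and even slightly cleaner than the paper's: the observation that $M\cup N$ is closed and scattered (closed countable by Cantor--Bendixson, so the union of two such sets is again scattered), and the second application of Lemma~\ref{lem 3} to $s\mapsto s^{\ast}-(\sec\theta)s$, which yields $\rho$ affine and hence $M=N=\emptyset$ without the paper's ``by continuity'' extension of Eqn.~(5.3). But there is a genuine gap at the start: you invoke Lemma~\ref{lem1} on each component of $\mathcal{D}$ as if its hypotheses were automatically satisfied there. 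They are not. Lemma~\ref{lem1} presupposes an FM structure: a chosen modified orthogonal frame with $B_{\varphi}$ and $N_{\psi}$ along the joining line, and $\delta=a/\kappa_{\varphi}$ with a \emph{single} constant $a$. In the WM setting of Definition~\ref{def 1} the joining line is a priori only orthogonal to both tangents and along the principal normal of one of the two curves; that it is along the binormal of $\varphi$, that the frames extend continuously across $M\cup N$, and that the constant $a$ is the same on every component of $\{\delta\neq 0\}$ all have to be proved. This is exactly the content of the paper's Steps~1 and~2, where the global form of $\delta$ is itself obtained from Lemma~\ref{lem 3} combined with an open--closed connectedness argument on the set $\{s:\delta(s)\neq 0\}$; your proposal omits this entirely.

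The second problem is your dismissal of $\cos\theta=0$. You rule it out via the identity $\frac{ds}{ds^{\ast}}=\cos\theta$, i.e.\ Eqn.~(4.5) of Lemma~\ref{lem1}; but that identity is obtained by taking the $T_{\varphi}$-component of $\psi'$, a computation that uses precisely the frame alignment you have not yet established. The paper instead arrives at $\frac{ds^{\star}}{ds}\left(1+a\kappa_{\psi}\right)=\cos\theta$ (its Eqn.~(5.3)), in which $\cos\theta=0$ is compatible with $\frac{ds^{\star}}{ds}\neq 0$ because the factor $1+a\kappa_{\psi}$ may vanish, and it therefore treats $\cos\theta=0$ as a separate case, showing there that $\frac{ds^{\star}}{ds}=\pm a\tau_{\varphi}$ with $\tau_{\varphi}$ nowhere zero before concluding $M=N=\emptyset$. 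To make your argument complete you must either first carry out the analogue of the paper's Steps~1--2 (after which your exclusion of $\cos\theta=0$ can be justified), or include this case analysis.
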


\begin{proof}
Let $\varphi(s)$, $s\in I$ be a $WM$ conjugate of $WM$ curve $\psi (s^{\star }),s^{\star }\in I^{\star }$. From the definition of $\varphi (s)$ and $\psi (s^{\star })$, it follows that both the curves have a $C^\infty$ family of the tangent vectors $T_{\psi }(s^{\star })$, $T_{\varphi
}(s)$. Let%
\begin{equation}
\psi (s)=\psi (\rho (s))=\varphi (s)+\delta (s)B_{\varphi }(s),  \tag{5.1}
\end{equation}%
where $B_{\varphi }(s)$ is a vector function and $ \delta
(s)\geq 0$, $\forall s\in I$. Let $\mathcal{D}=I\backslash N$, $\mathcal{D}^{\star }=I^{\star
}\backslash \rho (M)$. Then $s^{\star }(s)\in $ $C^{\infty }$ on $D^{\star
}.$

\textbf{Step 1.} 
Proving $\delta =\frac{a}{\kappa _{\varphi }(s)}.$ Since $%
\delta(=\left \Vert \psi (s)-\varphi (s)\right \Vert)$ is $C^{\infty }$ on $I$ and nowhere zero on every interval of $\mathcal{D}$. Let $\mathcal{X}$ be any component of $\mathcal{P}:=\left \{ s\in I:\delta (s)\neq 0\right \} $. Hence both $\mathcal{P}$ and $\mathcal{X}$ are open in $I$. Consider $\mathcal{L}$ as a component interval of $\mathcal{X}\cap \mathcal{D}$. Then, $\delta (s)$ and $%
B_{\varphi }(s)$ are of class $C^{\infty }$ on $\mathcal{L}$, and from Eqn. (5.1), we have%
\begin{equation*}
\psi ^{\prime }(s)=T_{\varphi }+\delta ^{\prime }(s)B_{\varphi }(s)+\delta
(s)B_{\varphi }^{\prime }(s).
\end{equation*}%
By the definition of a $WM$ curve, we know that $\left \langle T_{\varphi
},B_{\varphi }(s)\right \rangle =0=$ $\left \langle \psi ^{\prime }(s^{\star
}),B_{\varphi }(s)\right \rangle $. Hence, using$\left \langle
B_{\varphi }^{\prime }(s),B_{\varphi }(s)\right \rangle =0$, we get
\begin{equation*}
0=\left( \delta ^{\prime }(s)+\delta (s)\frac{\kappa _{\varphi }^{\prime
}(s)}{\kappa _{\varphi }(s)}\right) \left \langle B_{\varphi }(s),B_{\varphi
}(s)\right \rangle .
\end{equation*}%
Therefore $\delta =\frac{a}{\kappa _{\varphi }}$ on $\mathcal{L}$, where $a$ is
constant. Thus $\delta $ is not constant on each interval of the set $%
\mathcal{X}\cap \mathcal{D}$. But by the given conditions $\mathcal{X}\backslash \mathcal{D}$ has void dense-in-itself
kernel. It follows from Lemma \ref{lem 3} that $\delta $ is not constant (and
non-zero) on $\mathcal{X}$. As $\delta $ is continuous on $I$, $\mathcal{X}$ should be closed
in $I$. But in $I$, $\mathcal{X}$ is also open. Hence, by connectedness, 
$\mathcal{X}=I$, i.e., $\delta $ is not constant on $I$.

\textbf{Step 2.} Existence of frames 
\begin{equation*}
\left \{ T_{\varphi }(s),N_{\varphi }(s),N_{\varphi }(s)\right \} ,\left \{
T_{\psi }(s^{\star }),N_{\psi }(s^{\star }),B_{\psi }(s^{\star })\right \}
\end{equation*}%
which are the modified orthogonal frames for $\varphi (s)$ on $\mathcal{D}$ and $\psi (s^{\star })$
on $\mathcal{D}^{\star }$, respectively. Since $\delta =\frac{a}{\kappa _{\varphi }%
}$ is a non-zero function, it follows from Eqn. (5.1) that $B_{\varphi }(s)$ is
continuous on $I$ and is of class $C^{\infty }$ on $\mathcal{D}$, and orthogonal to $%
T_{\varphi }(s)$. Define $B_{\varphi }(s)=T_{\varphi }(s)\wedge N_{\varphi
}(s)$, then $\left \{ T_{\varphi }(s),N_{\varphi }(s),B_{\varphi }(s)\right \} $
forms a right-handed modified orthonormal frame for $\varphi (s)$ which is of class $C^\infty$ on $\mathcal{D}$ and
continuous on $I$.

From the definition of $WM$ curve, it follows that there exists a scalar
function $\kappa _{\psi }(s^{\star })$ such that $T_{\psi }^{\prime
}(s^{\star })=\kappa _{\psi }(s^{\star })N_{\psi }(s^{\star })$ on $I^{\star }$. Thus, $\left \langle
T_{\psi }^{\prime }(s^{\star }),N_{\psi }(s^{\star })\right \rangle
=\kappa _{\psi }^{2}(s^{\star })$ is continuous on $I^{\star }$ and of class $%
C^{\infty }$ on $\mathcal{D}^{\star }$. Hence the first Frenet formula holds on 
$\mathcal{D}^{\star }$. It is now easy to show that there exists a function $\tau _{\varphi }(s)$ of class $C^\infty$ on $\mathcal{D}$ such that the Frenet
formulas hold. Thus $\left \{ T_{\varphi }(s),N_{\varphi }(s),B_{\varphi
}(s)\right \} $ is a modified orthogonal frame for	 $\varphi (s)$ on $\mathcal{D}$.

Similarly, there exists a modified orthogonal frame $\left \{T_{\psi }(s^{\star }),N_{\psi }(s^{\star }),B_{\psi }(s^{\star
})\right
\} $ for $\psi (s^{\star })$, which is continuous on $I^{\star }$
and is a Frenet frame for $\psi (s^{\star })$ on $\mathcal{D}^{\star }$. Moreover, we
may choose%
\begin{equation*}
B_{\varphi }(s)=N_{\psi }(\rho (s)).
\end{equation*}%
\textbf{Step 3.} Show that $N=\phi $, $M=\phi.$ 

Noticing that on $\mathcal{D}$, we have%
\begin{equation*}
\left \langle T_{\psi },T_{\varphi }\right \rangle ^{\prime }=\left \langle
N_{\psi }\frac{ds^{\star }}{ds},T_{\varphi }\right \rangle +\left \langle
T_{\psi },\kappa _{\varphi }N_{\varphi }\right \rangle =0,
\end{equation*}%
so that on each component of $\mathcal{D}$, $\left \langle T_{\psi },T_{\varphi }\right \rangle $ is constant and hence on $I$ by Lemma \ref{lem 3}. Thus, there
exists a angle $\theta $ such that%
\begin{equation*}
T_{\psi }=T_{\varphi }\cos \theta +N_{\varphi }\sin \theta .
\end{equation*}%
Further,%
\begin{equation*}
B_{\varphi }(s)=N_{\psi }(\rho (s))
\end{equation*}%
and so 
\begin{equation*}
B_{\psi }(s^{\star })=-T_{\varphi }\sin \theta +\frac{N_{\varphi }}{\kappa
_{\varphi }}\cos \theta .
\end{equation*}%
Hence 
$\left \{ T_{\psi }(s^{\star }),N_{\psi }(s^{\star }),B_{\varphi}(s)\right \} $ are also of class $C^{\infty }$ on $\mathcal{D}$. On the other hand, with respect to $s^{\star }$ on 
$\mathcal{D}^{\star }$, $%
\left \{ T_{\psi }(s^{\star }),N_{\psi }(s^{\star }),B_{\psi }(s^{\star
})\right \} $ are of class $C^{\infty }$. Writing Eqn. (5.1) in the form%
\begin{equation*}
\varphi =\psi -\frac{a}{\kappa _{\psi }}N_{\psi }\text{ or }\varphi =\psi
-\delta N_{\psi }
\end{equation*}%
and differentiating with respect to $s$ on $\mathcal{D}\cap \rho ^{-1}(\mathcal{D}^{\star })$,
we have%
\begin{equation}
T_{\varphi }=\frac{ds^{\star }}{ds}\left[ \left( 1+a\kappa _{\psi }\right)
T_{\psi }-\frac{a\tau _{\psi }}{\kappa _{\psi }}B_{\psi }\right] . 
\tag{5.2}
\end{equation}%
But%
\begin{equation*}
T_{\varphi }=T_{\psi }\cos \theta -B_{\psi }\sin \theta .
\end{equation*}%
Hence%
\begin{equation}
\frac{ds^{\star }}{ds}\left( 1+a\kappa _{\psi }\right) =\cos \theta \text{
and }\frac{a\tau _{\psi }}{\kappa _{\psi }}=-\sin \theta .  \tag{5.3}
\end{equation}%
Since $\kappa _{\psi }^{2}(s^{\star })=\left \langle T_{\psi }^{\prime
},N_{\psi }\right \rangle $ is continuous on $I^{\star }$ and $%
\rho ^{-1}(\mathcal{D}^{\star })$ is dense, it follows by continuity that Eqn. (5.3)
holds throughout $\mathcal{D}$.

\textbf{Case 1.} $\cos \theta \neq 0$. Then Eqn. (5.3) implies $\frac{%
ds^{\star }}{ds}\neq 0$ on $\mathcal{D}$. Hence $M=\phi $. Similarly $%
N=\phi $.

\textbf{Case 2.} $\cos \theta =0$. Then%
\begin{equation}
T_{\psi }=\pm \frac{N_{\varphi }}{\kappa _{\varphi }}.  \tag{5.4}
\end{equation}%
Taking derivative of Eqn. (5.1) with respect to $s$ in $\mathcal{D}$, we get%
\begin{equation}
T_{\psi }\frac{ds^{\star }}{ds}=\pm \frac{a\tau _{\varphi }}{\kappa _{\varphi
}}N_{\varphi }.  \tag{5.5}
\end{equation}%
Hence using Eqn. (5.4) in Eqn. (5.5), we have%
\begin{equation*}
\frac{ds^{\star }}{ds}=\pm a\tau _{\varphi }.
\end{equation*}

Therefore, we get 
\begin{equation*}
\tau _{\varphi }=\pm \frac{1}{a}\frac{ds^{\star }}{ds}
\end{equation*}%
and so also on $I$, by Lemma \ref{lem 3}. It follows that $\tau _{\varphi }$ is nowhere
zero on $I$. Consequently $\psi (s^{\star })=\varphi (s)+\delta
(s)B_{\varphi }(s)$ is of class $C^{\infty }$ on $I^{\star }$. Hence $%
N=\phi$. Similarly $M=\phi $.
\end{proof}

\end{document}